\theoremstyle{plain}
\newtheorem{theorem}{Theorem}
\newtheorem{corollary}[theorem]{Corollary}
\newtheorem{lemma}[theorem]{Lemma}
\newtheorem{proposition}[theorem]{Proposition}
\theoremstyle{definition}
\newtheorem{definition}[theorem]{Definition}
\newcommand{\seqnum}[1]{\href{http://oeis.org/#1}{\underline{#1}}}
\title{{\bf A critical quartet for queuing couples}}
\author{Donovan Young\\
\small St Albans, Hertfordshire, UK\\[-0.8ex] 
\small\tt donovan.m.young@gmail.com\\}
\begin{document}

\maketitle

\begin{abstract}
  We enumerate arrangements of $n$ couples, i.e. pairs of people,
  placed in a single-file queue, and consider four statistics from the
  vantage point of a distinguished given couple. In how many
  arrangements are exactly $p$ of the $n-1$ other couples i)
  interlaced with the given couple, ii) contained within them, iii)
  containing the given couple, and iv) lying outside the given couple?
  We provide generating functions which enumerate these arrangements
  and obtain the associated continuous asymptotic distributions in the
  $n\to\infty$ limit. The asymptotic distributions corresponding to
  cases i), iii), and iv) evince critical phenomena around the value
  $p_c=(n-1)/2$, such that the probability that 1) the couple is
  interlaced with more than half of the other couples, and 2) the
  couple is contained by more than half of the other couples, are both
  zero in the strict $n\to\infty$ limit. We further show that the
  cumulative probability that less than half of the other couples lie
  outside the given couple is $\pi/4$ in the limit, and that the
  associated distribution is uniform for $p<p_c$ .
\end{abstract}

\section{Introduction and main results}

The purpose of this paper is to study linear arrangements of $n$
distinguishable pairs of objects, treating the two members of a pair
as indistinguishable. The connection to linear chord diagrams is
immediate, as we can represent the pairs as chords joining two of $2n$
vertices laid out in a line, see Figure \ref{Fig:example}. The main
difference is that we treat the $n$ chords, ab initio, as
distinguishable.

The study of (indistinguishable) chord diagrams has a rich
history\footnote{The interested reader is directed to Pilaud and
  Ru\'{e} \cite{PR} for a more complete list of references.}. Touchard
\cite{T} and Riordan \cite{R} enumerated configurations by the total
number of crossings, and the limiting Normal distribution was obtained
by Flajolet and Noy \cite{FN}. More recently Pilaud and Ru\'{e}
\cite{PR} have extended the study of crossings in several
directions. Kreweras and Poupard \cite{KP} enumerated configurations
by the number of so-called short pairs, where adjacent vertices are
joined by a chord, finding that they are asymptotically Poisson in
distribution; c.f. Cameron and Killpatrick \cite{CK} and Krasko and
Omelchenko \cite{KO} for more modern treatments. 

We will enumerate configurations from the vantage point of a
distinguished {\it given} pair (which might appear in any position)
according to the relative position of the remaining $n-1$ pairs. Each
of these remaining pairs can be in one of four relative positions: i)
interlaced with the given pair, ii) entirely contained within the
endpoints of the given pair, iii) arching over the given pair and
hence entirely containing it, or iv) positioned entirely outside,
either to the left, or to the right, of the given pair.
\begin{figure}[t]
\begin{center}
\includegraphics[bb=0 0 282 73, height=1.25in]{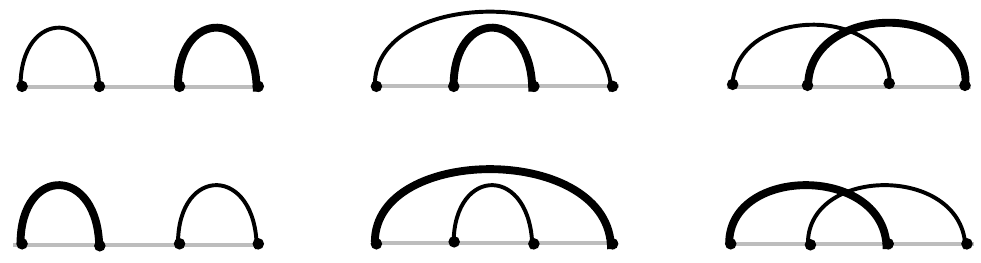}
\end{center}
\caption{The 6 configurations for the case $n=2$. The given pair is
  indicated as a bold arc. There are 4 configurations where the given
  pair is not crossed, hence $K_{2,0}=4$, whilst there are 2 where it
  is crossed once, hence $K_{2,1}=2$. Similarly $C_{2,0}=G_{2,0}=5$,
  $C_{2,1}=G_{2,1}=1$, and $X_{2,0}=4$, $X_{2,1}=2$.}
\label{Fig:example}
\end{figure}
It is clear that the total number of arrangements of the $n$
distinguishable pairs is $n!\,(2n-1)!!$, as there are $(2n-1)!!$
different linear chord diagrams. Due to the fact that we are
essentially interested in a single marked pair, i.e. the given pair,
we can safely paint the remaining $n-1$ pairs with the same brush and
treat them as indistinguishable -- this yields $n\,(2n-1)!!$
configurations, and is the number of linear chord diagrams with one
marked chord.
\begin{definition}
  A pair is said to be {\it crossed} by another pair if the other
  pair has one endpoint contained within the first pair, and the other
  outside of it, i.e. the two pairs are interlaced.
\end{definition}
\begin{definition}
  A pair is said to be {\it contained} by another pair if its
  endpoints are both located within the endpoints of the other pair.
\end{definition}
\begin{definition}
  A pair is said to be {\it containing} another pair if the other pair
  is contained by it.
\end{definition}
\begin{definition}
  A pair is said to be {\it excluded} by another pair if its endpoints
  are both to the left, or both to the right of the other pair.
\end{definition}
Amongst the $n\,(2n-1)!!$ arrangements, let there be $K_{n,p}$ where
exactly $p\in[0,n-1]$ of the remaining $n-1$ pairs are crossed by the
given pair. Similarly we define $C_{n,p}$, $G_{n,p}$, and $X_{n,p}$ to
be the number of configurations where exactly $p$ of the remaining
pairs are, respectively, contained by, containing, and finally
excluded by the given pair; see Figure \ref{Fig:example}.\\\\
\begin{figure}[h]
\begin{center}
  \includegraphics[bb=76 222 536 568, width=3.in]{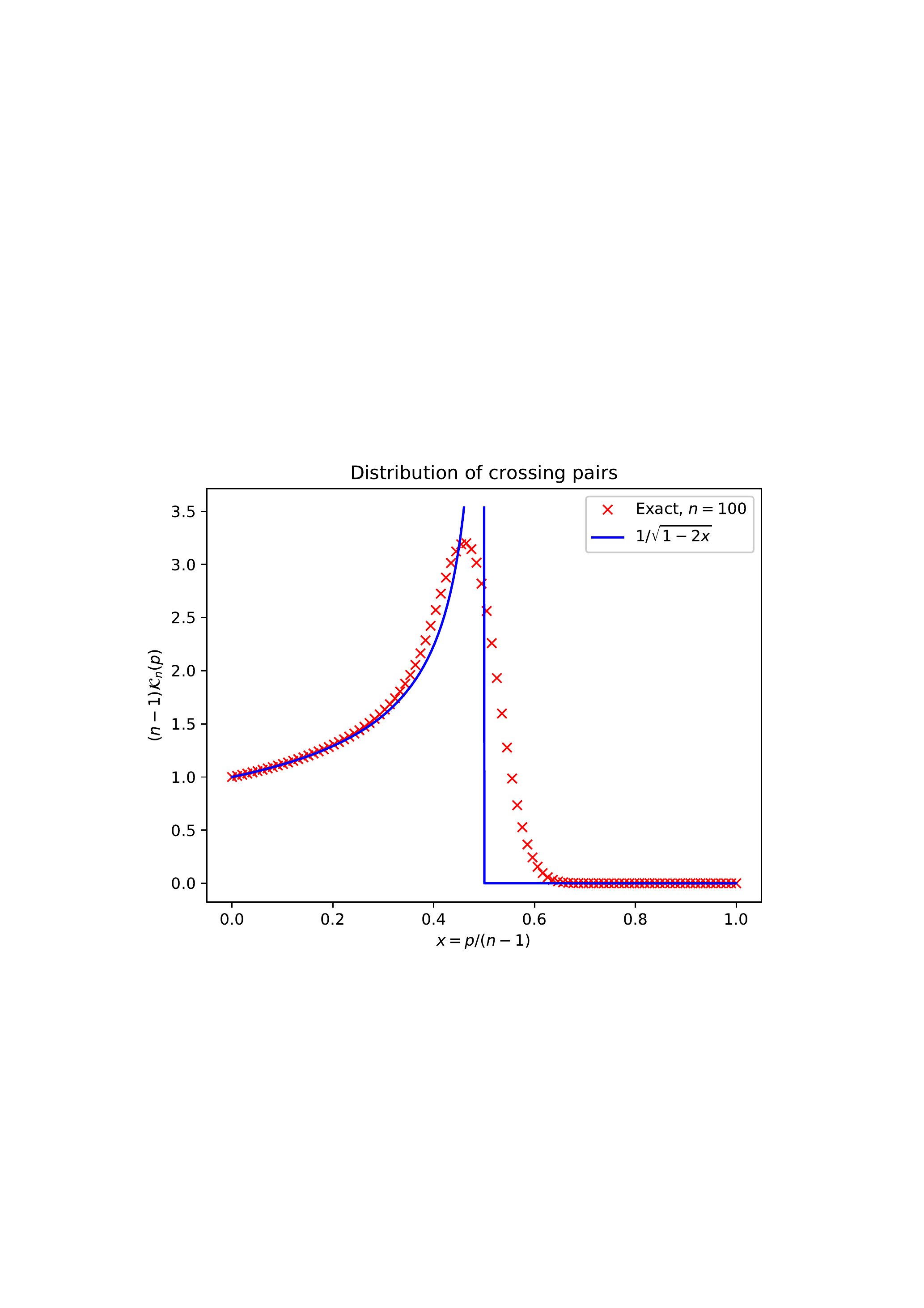}
  \includegraphics[bb=76 222 536 568, width=3.in]{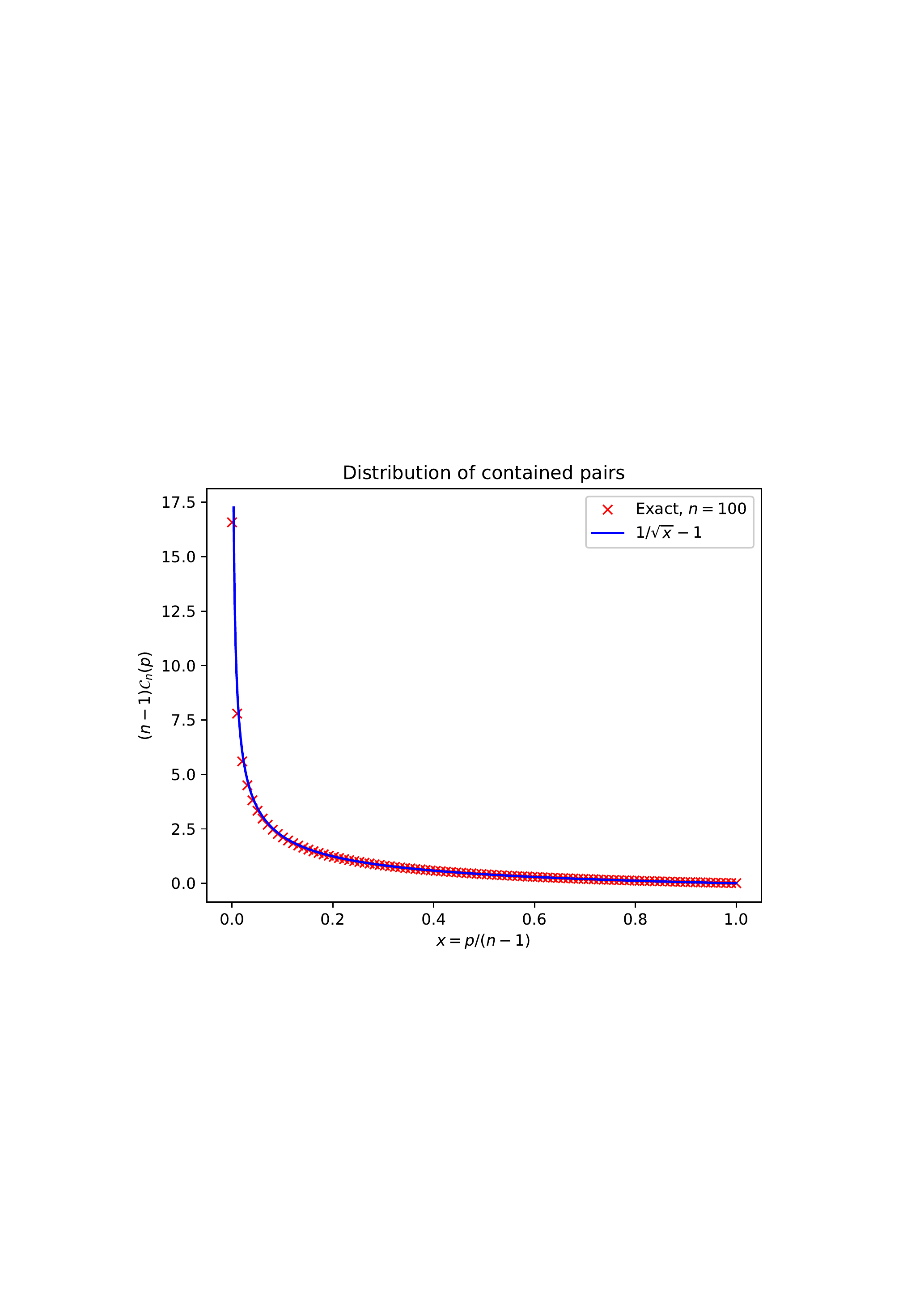}\\
  \includegraphics[bb=76 222 536 568, width=3.in]{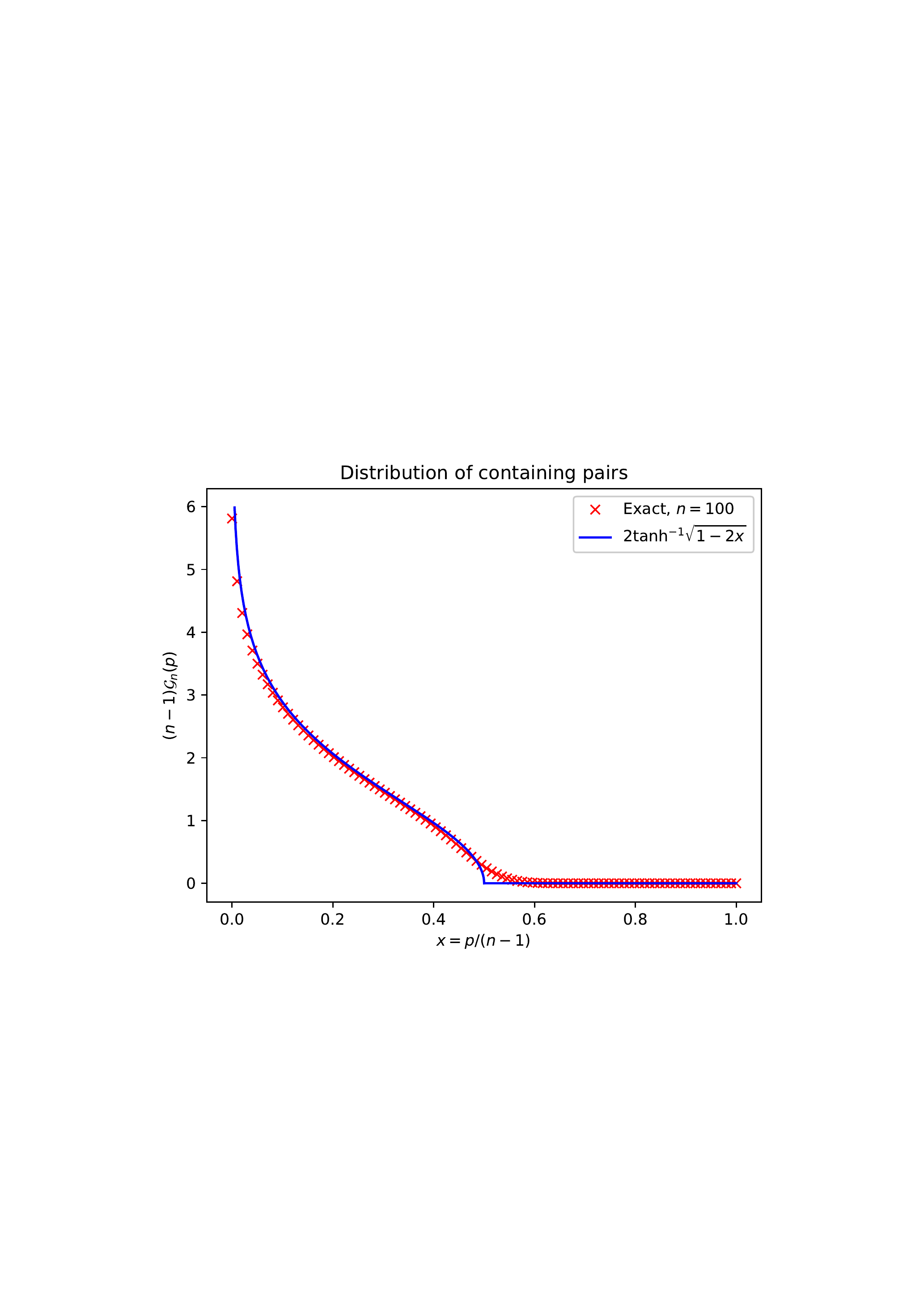}
  \includegraphics[bb=76 222 536 568, width=3.in]{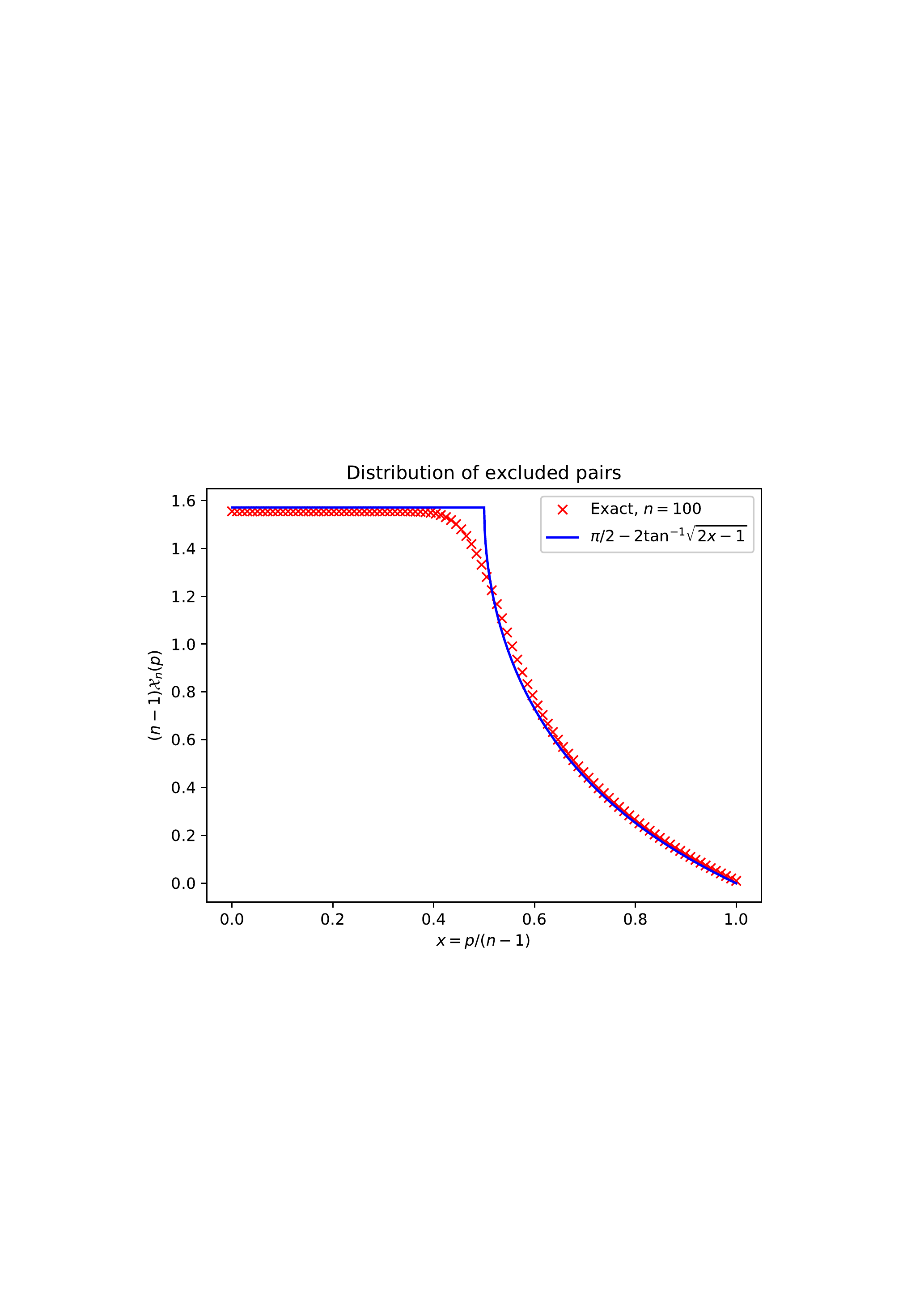}
\end{center}
\caption{The quartet of distributions. On the top row: on the left the
  distribution of pairs crossing the given pair, on the right pairs
  contained within the given pair. On the bottom row: on the left
  pairs which contain the given pair, on the right pairs situated
  outside the given pair. In each case the solid blue line is the
  asymptotic distribution, while the red ``x'' is the discrete value
  from the exact distribution for $n=100$.}
\label{Fig:dists}
\end{figure}
\noindent{\bf Generating functions} We define exponential generating
functions as follows
\begin{equation}\nonumber
   K(y,z)=  \sum_{n\geq 1}\sum_{p=0}^{n-1} K_{n,p}\, y^p \,\frac{z^n}{n!},
\end{equation}
and similarly for the $C_{n,p} \to C(y,z)$, $G_{n,p}\to G(y,z)$, and
$X_{n,p}\to X(y,z)$. In Theorems \ref{Thm:Kyz}, \ref{Thm:Cyz},
\ref{Thm:Gyz}, and \ref{Thm:Xyz} we prove that
\begin{equation}\nonumber
  \begin{split}
    &K(y,z) = \frac{z}{\sqrt{1-2z}\left(1-z(1+y)\right)},\quad
    C(y,z) = \frac{\sqrt{1-2yz}-\sqrt{1-2z}}{(1-2z)(1-y)},\\
    &G(y,z) = \frac{1}{(1-y)\sqrt{1-2z}}\ln\left(\frac{1-z(1+y)}{1-2z}\right),\\
    &X(y,z) = \frac{1}{(1-y)\sqrt{1-2z}}
  \tan^{-1}\frac{(1-y)z}{\sqrt{(1-2z)(1-2yz)}}.
  \end{split}
 \end{equation}
\noindent The form of $K(y,z)$ implies the recursion relation $K_{n,p}
= n \,K_{n-1,p} + n \,K_{n-1,p-1}$, $K_{n,0} =[z^n]K(0,z)$.\\\\
\noindent{\bf Asymptotic distributions}  We will also be interested in the associated
discrete probability distributions
\begin{equation}\nonumber
  P(\text{exactly $p$ pairs cross the given pair})={\cal K}_n(p)
  = \frac{1}{n\,(2n-1)!!}\,K_{n,p},
\end{equation}
and so for ${\cal C}_n(p)$, ${\cal G}_n(p)$, and ${\cal X}_n(p)$,
where we treat all $n\,(2n-1)!!$ arrangements as equally likely. In
the limit as $n\to\infty$ we define a continuous real variable
$x=\lim_{n\to\infty}p/(n-1) \in [0,1]$, and an associated continuous
probability distribution
\begin{equation}\nonumber
  {\cal K}(x) = \lim_{n\to\infty} (n-1)\,{\cal K}_n\left((n-1)x\right),
\end{equation}
and so for ${\cal C}(x)$, ${\cal G}(x)$, and ${\cal X}(x)$. In
Theorems \ref{Thm:Kx}, \ref{Thm:Cx}, \ref{Thm:Gx}, and \ref{Thm:Xx} we
prove that
\begin{equation}\nonumber
  \begin{split}
 & {\cal K}(x) = \begin{cases}
   1/\sqrt{1-2x}& 0\leq x < 1/2 \\
    0& 1/2 \leq x \leq 1
    \end{cases},\qquad
  {\cal C}(x) = \frac{1}{\sqrt{x}}-1, \quad 0 < x \leq 1,\\
&  {\cal G}(x)=
  \begin{cases}
    2\tanh^{-1}\sqrt{1-2x}& 0< x \leq 1/2 \\
    0& 1/2 < x \leq 1
    \end{cases},\\
&  {\cal X}(x)=
  \begin{cases}
    \pi/2& 0\leq x < 1/2 \\
    \pi/2-2\tan^{-1}\sqrt{2x-1}& 1/2 \leq x \leq 1
  \end{cases}.
  \end{split}
\end{equation}

In Figure \ref{Fig:dists} the four distributions are shown. It is
remarkable that ${\cal K}(x)$, ${\cal G}(x)$, and ${\cal X}(x)$ all
show critical phenomena\footnote{For an introduction to critical
  phenomena, see \cite{G}. The term is usually reserved for the
  observation of a sharp transition in a system when a control
  variable is adjusted beyond a critical value; we are using it in a
  slightly more general manner here.} at $x=1/2$, corresponding to
half of the $n-1$ pairs.  This is most striking in the discontinuity
observed in ${\cal K}(x)$, where the asymptotic probability that the
given pair is crossed by more than half of the remaining pairs is
zero, while the mode of the distribution is also half of the remaining
pairs. In ${\cal G}(x)$ we see that the asymptotic probability that
the given pair is contained within more than half of the remaining
pairs is also zero. The distribution ${\cal X}(x)$ shows that the
asymptotic (cumulative) probability that less than half of the
remaining pairs are outside the given pair is given by $\pi/4$, while
the distribution itself is uniform in this region.

In Lemmas \ref{Lem:Kfact}, \ref{Lem:Cfact}, \ref{Lem:Gfact}, and
\ref{Lem:Xfact}, we obtain expressions for the $m^\text{th}$ factorial
moments of the exact distributions. In particular,
\begin{equation}\nonumber
  \begin{split}
&  \sum_{p=0}^{n-1} \frac{p!}{(p-m)!}\,{\cal K}_n(p)
    = \frac{(n-1)!}{(n-m-1)!}\frac{m!}{(2m+1)!!},\\
&  \sum_{p=0}^{n-1} \frac{p!}{(p-m)!}\,{\cal C}_n(p)
  =\frac{(n-1)!}{(n-m-1)!}\frac{1}{(m+1)(2m+1) },\\
&\sum_{p=0}^{n-1} \frac{p!}{(p-m)!}\,{\cal G}_n(p)=
  \frac{(n-1)!}{(n-m-1)!}\frac{m!}{(m+1)(2m+1)!! },\\
&\sum_{p=0}^{n-1} \frac{p!}{(p-m)!}\,{\cal X}_n(p)=
  \frac{(n-1)!}{(n-m-1)!}\frac{1}{m+1}\int_{1/2}^1dx\,\frac{x^m}{\sqrt{2x-1}}.
  \end{split}
\end{equation}
The mean values for the four distributions tell us that, on average, a
third of the remaining pairs cross the given pair, a sixth are
contained by it, another sixth contain the given pair, and the
remaining third are excluded by it.

\section{Enumeration by crossings}

\begin{definition}
  We define the {\it size} of a pair to be the number of vertices
  contained between its endpoints; the minimum size is zero, while the
  maximum size achievable is $2n-2$.
\end{definition}

\noindent{\bf Distribution of sizes} There are clearly $(2n - d - 1)$ positions
a given pair of size $d$ can occupy. Once placed, there are $(2n-3)!!$
ways of placing the remaining $(n-1)$ indistinguishable pairs. The
probability ${\cal S}_n(d)$ that the given pair has size $d$ is therefore
\begin{equation}\nonumber
  {\cal S}_n(d) = (2n-d-1)\frac{(2n-3)!!}{n\,(2n-1)!!}
  = \frac{1}{n}\left(1 - \frac{d}{2n-1}\right),
\end{equation}
which is a trapezoidal distribution. Straightforward computations
yield a mean of $2(n-1)/3$, or a third of the maximal distance, and a
variance of $(2n+1)(n-1)/9$.\\

\noindent{\bf Counting by crossings} The minimum number of times a
given pair can be crossed is zero -- this is when all its contained
vertices are matched amongst one another, and so with all its excluded
vertices. The maximum number of times a given pair can be crossed is
$n-1$ as there are $2n-2$ vertices other than those occupied by the
endpoints of the given pair, and to achieve the maximal crossing we
require half of them to be contained (i.e. the given pair has a size
of $n-1$) and then each matched with one of the $n-1$ excluded
vertices. Let $p$ be the number of times a given pair of size $d$ is
crossed. It is clear that $p\equiv d \pmod{2}$.
\begin{proposition}\label{Prop:Knpd}
The number $K_{n,p,d}$ of configurations in which the given pair has size $d$, and
is crossed by $p$ other pairs, is given by
\begin{equation}\nonumber
K_{n,p,d}= \frac{2^{p-n+1}\,d!\,(2n-d-1)!}
  {p!\,\left(n-1-\frac{d-p}{2}\right)!\,\left(\frac{d-p}{2}\right)!},
\end{equation}
where $d\,\mathrm{mod}\, 2 \leq p \leq \min(d,2n-d-2)$, and $0 \leq d \leq 2n-2$.
\end{proposition}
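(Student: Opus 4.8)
The plan is to fix the size $d$ and count directly, summing at the end over the positions the given pair may occupy. As in the distribution-of-sizes computation, a pair of size $d$ has $2n-d-1$ admissible positions, and once it is placed the remaining $2n-2$ vertices are partitioned into a block of $d$ \emph{inside} vertices (lying between its endpoints) and a block of $2n-2-d$ \emph{outside} vertices (to its left or right). The key observation is that, for counting crossings, only this inside/outside dichotomy matters: a pair crosses the given pair precisely when it has one endpoint in each block, whereas every pair with both endpoints in the same block fails to cross it. In particular an outside–outside pair straddling the two sides (which in fact \emph{contains} the given pair) is non-crossing, and must be grouped with the genuinely excluded pairs. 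Consequently the number of matchings with exactly $p$ crossings depends only on $d$ and $2n-2-d$, and the factor $2n-d-1$ enters merely as an overall multiplicity.

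Next I would assemble such a matching in stages. Choose the $p$ inside and $p$ outside vertices participating in crossings, in $\binom{d}{p}\binom{2n-2-d}{p}$ ways, and pair them across the two blocks in $p!$ ways. The remaining $d-p$ inside vertices are matched among themselves — which forces the parity condition $p\equiv d\pmod 2$ and contributes $(d-p-1)!!$ matchings — while the remaining $2n-2-d-p$ outside vertices are matched among themselves in $(2n-2-d-p-1)!!$ ways. Writing $a=\tfrac{d-p}{2}$ and $b=\tfrac{2n-2-d-p}{2}=n-1-\tfrac{d+p}{2}$ for the numbers of inside–inside and outside–outside pairs, the count per position is $\binom{d}{p}\binom{2n-2-d}{p}\,p!\,(d-p-1)!!\,(2n-2-d-p-1)!!$. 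The admissible range $d\bmod 2\le p\le\min(d,2n-d-2)$ records exactly that $a,b\ge 0$ together with $p\le d$ and $p\le 2n-2-d$.

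The remaining work is algebraic simplification, which I expect to be routine rather than the crux. Using $(2k-1)!!=(2k)!/(2^k k!)$ one finds $\binom{d}{p}(d-p-1)!!=d!/(p!\,2^a a!)$ and $\binom{2n-2-d}{p}(2n-2-d-p-1)!!=(2n-2-d)!/(p!\,2^b b!)$, so that after cancelling one factor of $p!$ the per-position count collapses to $d!\,(2n-2-d)!/\bigl(p!\,2^{a+b}a!\,b!\bigr)$ with $a+b=n-1-p$. Multiplying by the $2n-d-1$ positions and absorbing $(2n-d-1)(2n-2-d)!=(2n-d-1)!$ then yields the claimed closed form, the last denominator factorial being $b!=\bigl(n-1-\tfrac{d+p}{2}\bigr)!$. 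The only point demanding genuine care is the bookkeeping of the inside/outside split — specifically the recognition that containing pairs are non-crossing and hence lumped with the excluded ones — since it is precisely this that makes the matching count independent of the left/right placement and lets the positional factor factor out cleanly. As a sanity check I would confirm the total $\sum_{d,p}K_{n,p,d}=n\,(2n-1)!!$ and the small cases $n=2,3$ against Figure~\ref{Fig:example} and the recursion for $K_{n,p}$.
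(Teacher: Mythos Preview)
Your approach is essentially identical to the paper's: fix the position of the given pair, choose $p$ inside and $p$ outside vertices for the crossing pairs and match them in $p!$ ways, then pair up the leftovers with double factorials, and finally sum over the $2n-d-1$ positions; the paper writes down exactly the product $\binom{d}{p}\binom{2n-d-2}{p}\,p!\,(d-p-1)!!\,(2n-d-p-3)!!\,(2n-d-1)$ and then simplifies. One remark worth making explicit: your simplification correctly produces the denominator factor $b!=\bigl(n-1-\tfrac{d+p}{2}\bigr)!$, whereas the displayed formula in the Proposition reads $\bigl(n-1-\tfrac{d-p}{2}\bigr)!$ --- that is a typographical slip in the statement (the paper's own use of the result in the proof of Lemma~\ref{Lem:Knp}, where it writes $(n-p-k-1)!$ with $k=(d-p)/2$, confirms that $\tfrac{d+p}{2}$ is intended), so your version is the right one.
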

\begin{proof}
In order to enumerate configurations where a given pair of size
$d$ is crossed $p$ times, we consider the $d$ contained vertices, and
choose $p$ of these to be matched with another selection of $p$
excluded vertices. The remaining contained vertices are then matched
amongst themselves, and so for the remaining excluded vertices.
\begin{itemize}
\item There are $p!{d\choose p}{2n-d-2\choose p}$ ways of choosing the
  $p$ contained and $p$ excluded vertices and then matching them up.
\item There are $(d-p-1)!!\,(2n-d-p-3)!!$ ways of matching the remaining vertices.
\item There are $(2n - d - 1)$ positions for the given pair to occupy.
\end{itemize}
We therefore have that
\begin{equation}\nonumber
K_{n,d,p}= 
  {d\choose p}{2n-d-2\choose p}\,p!\,(d-p-1)!!\,(2n-d-p-3)!!\,
    (2n - d - 1).
\end{equation}
Using the identity $(2n-1)!! = (2n)!/(n!2^n)$, and simplifying this
expression, we obtain the desired result.
\end{proof}
\begin{table}
\begin{center}
  \begin{tabular}{c|llllll}
  $n$ \textbackslash$p$& 0& 1& 2& 3& 4& 5\\
  \hline  
  1& 1\\
  2& 4& 2\\
  3& 21& 18& 6\\
  4& 144& 156& 96& 24\\
  5& 1245& 1500& 1260& 600& 120\\
  6& 13140& 16470& 16560& 11160& 4320& 720\\
\end{tabular}
\end{center}
\caption{The numbers $K_{n,p}$, \seqnum{A336598} in the OEIS, to
  appear. The first column is \seqnum{A233481}. The leading diagonal
  are the factorials $n!$.}
\end{table}
\begin{lemma}\label{Lem:Knp}
The number $K_{n,p}$ of configurations in which the given pair is
crossed by $p$ other pairs, is given by
\begin{equation}\nonumber
  K_{n,p}=n\,(2n-1)!!
 \int_0^1 d\alpha \, 2(1-\alpha) \,{n-1\choose p}
\left(2\alpha(1-\alpha)\right)^p \left( 1 -  2\alpha(1-\alpha)\right)^{n-p-1}.
\end{equation}
\end{lemma}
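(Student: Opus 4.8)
The plan is to sum the size-resolved count $K_{n,p,d}$ of Proposition~\ref{Prop:Knpd} over all admissible sizes $d$ and to recognize the result as the stated Euler integral. Because a given pair of size $d$ is crossed a number of times of the same parity as $d$, the identity $K_{n,p}=\sum_d K_{n,p,d}$ receives contributions only from those $d$ with $d\equiv p\pmod 2$ and $p\le d\le 2n-2-p$. I would reindex these by writing $d=p+2k$ with $0\le k\le n-1-p$, so that $\tfrac{d-p}{2}=k$ and the two denominator factorials of $K_{n,p,d}$ become $k!$ and $(n-1-p-k)!$, functions of $k$ alone.

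The key device is the Euler Beta integral
\[
  d!\,(2n-d-1)! = (2n)!\int_0^1 \alpha^{d}(1-\alpha)^{2n-1-d}\,d\alpha,
\]
which I would use to replace the product of factorials carrying the $d$-dependence in each term. After substituting $d=p+2k$ and interchanging the finite $k$-sum with the integral, $K_{n,p}$ becomes a single integral whose integrand contains the sum $\sum_{k=0}^{n-1-p}\tfrac{1}{k!\,(n-1-p-k)!}\,\alpha^{p+2k}(1-\alpha)^{2n-1-p-2k}$.

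The crux is to collapse this $k$-sum in closed form. Factoring out $\tfrac{1}{(n-1-p)!}\,\alpha^{p}(1-\alpha)^{p+1}$ leaves precisely $\sum_{k}\binom{n-1-p}{k}(\alpha^2)^{k}\big((1-\alpha)^2\big)^{n-1-p-k}$, which the binomial theorem sums to $\big(\alpha^2+(1-\alpha)^2\big)^{n-1-p}$. The rewriting $\alpha^2+(1-\alpha)^2=1-2\alpha(1-\alpha)$ then produces the factor $\big(1-2\alpha(1-\alpha)\big)^{n-p-1}$ of the statement, while the extracted monomial $\alpha^{p}(1-\alpha)^{p+1}=(1-\alpha)\,\alpha^p(1-\alpha)^p$ supplies both the weight $2(1-\alpha)$ and, up to a power of two, the factor $\big(2\alpha(1-\alpha)\big)^{p}$.

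It remains to collect the constants. Combining the prefactor $2^{p-n+1}/p!$ from Proposition~\ref{Prop:Knpd}, the factor $(2n)!$ from the Beta integral, and $1/(n-1-p)!$ from the extracted term, and then rewriting $(2n)!$ by means of $(2n-1)!!=(2n)!/(2^{n}n!)$, reconstitutes the binomial coefficient $\binom{n-1}{p}$, the weight $2(1-\alpha)$, the power $\big(2\alpha(1-\alpha)\big)^p$, and the overall normalization $n\,(2n-1)!!$. The main obstacle is purely one of bookkeeping: the scattered factorials and powers of two must be tracked carefully, and one must spot the rewriting $1-2\alpha(1-\alpha)=\alpha^2+(1-\alpha)^2$, which converts the superficially alternating expansion into a genuine binomial sum whose summation range $0\le k\le n-1-p$ aligns exactly with the binomial theorem.
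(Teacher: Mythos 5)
Your proposal is correct and follows the paper's own proof essentially step for step: summing Proposition~\ref{Prop:Knpd} over $d=p+2k$, inserting the Euler Beta integral, interchanging sum and integral, collapsing the $k$-sum via the binomial theorem with $\alpha^2+(1-\alpha)^2=1-2\alpha(1-\alpha)$, and recombining constants through $(2n-1)!!=(2n)!/(2^n n!)$. The only difference is cosmetic (you factor out $\alpha^p(1-\alpha)^{p+1}$ directly, whereas the paper factors $\alpha^p(1-\alpha)^{2n-p-1}$ and sums powers of $\alpha^2/(1-\alpha)^2$), so there is nothing further to add.
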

\begin{proof}
We sum the result of Proposition \ref{Prop:Knpd} over sizes $d$ to
produce $K_{n,p}$. For fixed $p$, we must sum $d$ over the range $p
\leq d \leq 2n - 2 - p$, where $d$ is incremented by $2$ in each
successive term. To make this summation more convenient we write $d =
2k + p$ and sum $k$ over $0 \leq k \leq n-p-1$:
\begin{equation}\nonumber
K_{n,p} =  \frac{2^{p-n+1}}{p!}\sum_{k=0}^{n-p-1}
  \frac{(2k + p)!\, (2n - 2k - p - 1)!}{(n-p-k-1)!\,k!}.
\end{equation}
We now exploit the following integral representation of the Euler Beta function:
\begin{equation}\nonumber
  \frac{(2k + p)!\, (2n - 2k - p - 1)!}{(2n)!} = \int_0^1 d\alpha \,
  \alpha^{2k+p} \, (1 - \alpha)^{2n - 2k - p - 1},
\end{equation}
to obtain
\begin{equation}\nonumber
  \begin{split}
K_{n,p} &=  \frac{2^{p-n+1}\,(2n)!}{p!}\int_0^1 d\alpha \,\sum_{k=0}^{n-p-1}
\frac{\alpha^{2k+p} \, (1 - \alpha)^{2n - 2k - p - 1}}{(n-k-p-1)!\,k!}\\
&= \frac{2^{p-n+1}\,(2n)!}{p!}\int_0^1 d\alpha \,
\sum_{k=0}^{n-p-1}
\frac{\alpha^p \, (1 - \alpha)^{2n - p - 1}}{(n-k-p-1)!\,k!}\,
\left(\frac{\alpha^2}{(1-\alpha)^2}\right)^k\\
&= \frac{2^{p-n+1}\,(2n)!}{p!}\int_0^1 d\alpha \,
\frac{\alpha^p \, (1 - \alpha)^{2n - p - 1}}{(n-p-1)!}\,
\left(1 + \frac{\alpha^2}{(1-\alpha)^2}\right)^{n-p-1}\\
&=n\,(2n-1)!!\, {n-1\choose p}\int_0^1 d\alpha \, 2(1-\alpha) \,
\left(2\alpha(1-\alpha)\right)^p \left( (1-\alpha)^2 + \alpha^2 \right)^{n-p-1}\\
&=n\,(2n-1)!! \int_0^1 d\alpha \, 2(1-\alpha) \,{n-1\choose p}
\left(2\alpha(1-\alpha)\right)^p \left( 1 -  2\alpha(1-\alpha)\right)^{n-p-1}.
\end{split}
\end{equation}
\end{proof}

\begin{theorem}\label{Thm:Kyz}
  The exponential generating function $K(y,z)$ is given by
  \begin{equation}\nonumber
    K(y,z)=  \sum_{n\geq 1}\sum_{p=0}^{n-1} K_{n,p}\, y^p \,\frac{z^n}{n!}
    = \frac{z}{\sqrt{1-2z}\left(1-z(1+y)\right)}.
    \end{equation}
\end{theorem}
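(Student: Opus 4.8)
The plan is to plug the integral representation from Lemma~\ref{Lem:Knp} directly into the definition of $K(y,z)$ and exchange the order of the summations and the integral. Substituting the Lemma gives
\begin{equation}\nonumber
K(y,z)=\sum_{n\geq 1}\frac{(2n-1)!!}{(n-1)!}\,z^n
\int_0^1 d\alpha\,2(1-\alpha)\sum_{p=0}^{n-1}\binom{n-1}{p}
\big(2\alpha(1-\alpha)y\big)^p\big(1-2\alpha(1-\alpha)\big)^{n-1-p},
\end{equation}
where I have used $n\,(2n-1)!!/n!=(2n-1)!!/(n-1)!$. The inner $p$-sum is a binomial expansion that collapses by the binomial theorem to $\big(1-2\alpha(1-\alpha)+2\alpha(1-\alpha)y\big)^{n-1}=\big(1-2\alpha(1-\alpha)(1-y)\big)^{n-1}$, which is the crucial simplification that removes all explicit $p$-dependence.

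Next I would carry out the sum over $n$. Writing $u=2\alpha(1-\alpha)(1-y)$, the remaining $n$-sum is $\sum_{n\geq 1}\frac{(2n-1)!!}{(n-1)!}z^n(1-u)^{n-1}$, and since $(2n-1)!!/(n-1)!=(2n-1)!!/(n-1)!$ the generating function $\sum_{n\geq1}\frac{(2n-1)!!}{(n-1)!}w^n$ should be recognised in closed form. Indeed, using $\sum_{n\geq 0}\binom{2n}{n}t^n=(1-4t)^{-1/2}$ together with $(2n-1)!!=(2n)!/(2^n n!)$ one finds $\frac{(2n-1)!!}{(n-1)!}=n\binom{2n}{n}2^{-n}\cdot\frac{(n-1)!\,}{(n-1)!}$, so the series evaluates to something of the form $\dfrac{w}{(1-2w)^{3/2}}$ (up to checking the constant), after setting $w=z(1-u)$ and possibly applying the derivative operator $w\,\frac{d}{dw}$ to the square-root generating function. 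The key step here is identifying this elementary hypergeometric sum; I expect it to reduce to $z(1-u)\big(1-2z(1-u)\big)^{-3/2}$.

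With the $n$-sum in closed form, the problem reduces to the single integral
\begin{equation}\nonumber
K(y,z)=\int_0^1 d\alpha\,2(1-\alpha)\,\frac{z(1-u)}{\big(1-2z(1-u)\big)^{3/2}},
\qquad u=2\alpha(1-\alpha)(1-y),
\end{equation}
which I would evaluate by an explicit antiderivative. Because $1-u=(1-\alpha)^2+\alpha^2+2\alpha(1-\alpha)y$ is a quadratic in $\alpha$ and the measure $2(1-\alpha)\,d\alpha$ is available, I expect the integrand to be an exact derivative: the factor $(1-\alpha)$ suggests that $\frac{d}{d\alpha}$ of some power of $\big(1-2z(1-u)\big)$ or of $\alpha(1-\alpha)$ will reproduce it, reducing the integral to boundary evaluations at $\alpha=0$ and $\alpha=1$. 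Carrying out this antidifferentiation and simplifying the endpoint values should yield the claimed $\dfrac{z}{\sqrt{1-2z}\,(1-z(1+y))}$.

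The main obstacle I anticipate is the final integration: while the binomial collapse and the $n$-summation are routine once the right closed forms are recalled, recognising the integrand as an exact differential (and getting the algebra of the boundary terms right, since $1-2z(1-u)$ takes different values at the two endpoints, namely $1-2z$ at $\alpha=0,1$ versus the $y$-dependent interior) is where the real work lies. If a clean antiderivative is not apparent, the fallback is to rescale or substitute $t=2\alpha(1-\alpha)$ and split the integral, but I expect the direct antidifferentiation to succeed and to produce the factor $1-z(1+y)$ in the denominator naturally from the value of $1-u$ at the symmetric point.
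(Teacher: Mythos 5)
Your route is the same as the paper's: collapse the $p$-sum with the binomial theorem, evaluate the $n$-sum via the double-factorial generating function, and finish with the $\alpha$-integral. The binomial collapse is correct, but your $n$-sum contains a genuine error. The series you need is
\[
\sum_{n\geq 1}\frac{(2n-1)!!}{(n-1)!}\,z^n(1-u)^{n-1}
= z\sum_{n\geq 1}\frac{(2n-1)!!}{(n-1)!}\,\bigl(z(1-u)\bigr)^{n-1}
= \frac{z}{\bigl(1-2z(1-u)\bigr)^{3/2}},
\]
which follows by differentiating $\sum_{n\geq 0}(2n-1)!!\,w^n/n!=(1-2w)^{-1/2}$. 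Instead you evaluated $\sum_{n\geq 1}\frac{(2n-1)!!}{(n-1)!}\,w^n=w(1-2w)^{-3/2}$ at $w=z(1-u)$, i.e.\ you treated $z^n(1-u)^{n-1}$ as if it were $\bigl(z(1-u)\bigr)^n$. This off-by-one in the power of $(1-u)$ plants a spurious factor $(1-u)$ in your integrand, and it is not harmless: writing $D=1-2z(1-u)$, so that $z(1-u)=(1-D)/2$, your integral becomes $\tfrac12\int_0^1\bigl(D^{-3/2}-D^{-1/2}\bigr)\,d\alpha$ (after symmetrizing $2(1-\alpha)\to 1$), whose expansion begins $\tfrac{2+y}{3}\,z+\cdots$ rather than $z+\cdots$; already at order $z^1$ this contradicts $K_{1,0}=1$. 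So the plan as written does not terminate at the stated formula.

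The fix is exactly the displayed line above, after which your instinct about the final integral is correct. Since the integrand depends on $\alpha$ only through $\alpha(1-\alpha)$, the substitution $\alpha\mapsto 1-\alpha$ lets you replace the measure $2(1-\alpha)\,d\alpha$ by $d\alpha$. Writing $Q=c+b\,\alpha(1-\alpha)$ with $c=1-2z$ and $b=4z(1-y)$, one checks
\[
\frac{d}{d\alpha}\,\frac{2\alpha-1}{\sqrt{Q}}=\frac{4c+b}{2\,Q^{3/2}},
\qquad\text{hence}\qquad
\int_0^1\frac{d\alpha}{Q^{3/2}}=\frac{4}{(4c+b)\sqrt{c}},
\]
because $4Q+b(2\alpha-1)^2=4c+b$ identically in $\alpha$. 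Since $4c+b=4\bigl(1-z(1+y)\bigr)$, multiplying by $z$ yields $z/\bigl(\sqrt{1-2z}\,(1-z(1+y))\bigr)$ as claimed. Your structural guess is also vindicated: the $\sqrt{1-2z}$ comes from the endpoint values of $Q$ at $\alpha=0,1$, while the factor $1-z(1+y)$ equals the value of $1-2z(1-u)$ at the symmetric point $\alpha=1/2$, entering through $4c+b$. Modulo the $n$-sum error, this is precisely the paper's proof.
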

\begin{proof}
  We sum the result of Lemma \ref{Lem:Knp} against $y^p$ to obtain
  \begin{equation}\nonumber
  \sum_{p=0}^{n-1} K_{n,p}\, y^p =
  n\,(2n-1)!!\int_0^1 d\alpha \, 2(1-\alpha) \, \bigl(
  1 - (1-y) 2\alpha(1-\alpha) \bigr)^{n-1}.
  \end{equation}
  We then perform the sum over $n$ against $z^n/n!$
  \begin{equation}\nonumber
  \begin{split}
 & \sum_{n,p} K_{n,p}\, y^p \,\frac{z^n}{n!} = \sum_n \frac{n\,(2n-1)!!}{n!}\,
z^n  \int_0^1 d\alpha \, 2(1-\alpha) \, \bigl(
1 - (1-y) 2\alpha(1-\alpha) \bigr)^{n-1}\\
&=\int_0^1 d\alpha \, 2(1-\alpha) \frac{z}
{\left(1 - 2 z \left(1 - (1-y) 2\alpha(1-\alpha)\right)\right)^{3/2}}
= \frac{z}{\sqrt{1-2z}\left(1-z(1+y)\right)}.
\end{split}
\end{equation}
\end{proof}

\begin{corollary}
  The $K_{n,p}$ obey the following recursion relation
\begin{equation}\nonumber
K_{n,p} = n \,K_{n-1,p} + n \,K_{n-1,p-1}, \qquad K_{n,0}
=[z^n]\frac{z}{\sqrt{1-2z}\left(1-z\right)},
\end{equation}
where we note that $K_{n,0}$ is $\seqnum{A233481}$ in the OEIS -- the
number of singletons (strong fixed points) in pair-partitions.
\end{corollary}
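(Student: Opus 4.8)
The plan is to read the recursion directly off the closed form for $K(y,z)$ established in Theorem \ref{Thm:Kyz}, by clearing the single $y$-dependent factor in its denominator. Writing the generating function as $K(y,z)=\frac{z}{\sqrt{1-2z}\,(1-z(1+y))}$ and multiplying through by $(1-z(1+y))$ yields the functional equation
\begin{equation}\nonumber
\bigl(1-z-zy\bigr)\,K(y,z)=\frac{z}{\sqrt{1-2z}}.
\end{equation}
The crucial structural observation is that the right-hand side is independent of $y$; all the $y$-dependence has been collected into the polynomial prefactor on the left. Extracting the coefficient of $y^p z^n$ on both sides should therefore make the $y$-graded terms on the left telescope into the claimed three-term relation.

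Next I would substitute $K(y,z)=\sum_{n\ge 1}\sum_{p\ge 0}\frac{K_{n,p}}{n!}\,y^p z^n$ into the left-hand side and match coefficients of $y^p z^n$. The term $K(y,z)$ contributes $K_{n,p}/n!$; the term $-z\,K(y,z)$ shifts $n\mapsto n-1$ and contributes $-K_{n-1,p}/(n-1)!$; and the term $-zy\,K(y,z)$ shifts both indices, contributing $-K_{n-1,p-1}/(n-1)!$. For every $p\ge 1$ the right-hand side carries no $y^p$, so these three pieces must cancel:
\begin{equation}\nonumber
\frac{K_{n,p}}{n!}=\frac{K_{n-1,p}}{(n-1)!}+\frac{K_{n-1,p-1}}{(n-1)!},
\end{equation}
and clearing denominators gives exactly $K_{n,p}=n\,K_{n-1,p}+n\,K_{n-1,p-1}$, the factor of $n$ being the signature of the exponential normalization.

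Finally the boundary row $p=0$ must be treated on its own, because this is precisely the one case in which the right-hand side contributes. Setting $y=0$ in Theorem \ref{Thm:Kyz} gives $K(0,z)=\frac{z}{\sqrt{1-2z}\,(1-z)}$, from which $K_{n,0}/n!=[z^n]K(0,z)$, determining the initial column of the table. The identification of this sequence with \seqnum{A233481}, the number of strong fixed points of pair-partitions, is then a matter of comparing initial terms or matching generating functions, which I would relegate to a remark. I do not anticipate a genuine obstacle here: the extraction is entirely routine, and the only point requiring care is to notice that the recursion is valid solely for $p\ge 1$, the $p=0$ case being governed separately by the $y$-independent remainder $z/\sqrt{1-2z}$ — which is exactly why the Corollary states the two pieces apart.
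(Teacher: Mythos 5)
Your proposal is correct and is essentially the paper's own argument made explicit: the paper's proof consists of the single remark that the recursion "is implied by the factor $1-z(1+y)$ in the denominator of $K(y,z)$," and your clearing of that factor followed by coefficient extraction in $y^p z^n$ is precisely the computation behind that remark, including the correct observation that the $y$-independent remainder $z/\sqrt{1-2z}$ confines the recursion to $p\ge 1$ and forces the separate $p=0$ boundary condition. Your version $K_{n,0}/n!=[z^n]K(0,z)$ is in fact stated more carefully than the paper's, which suppresses the $n!$ coming from the exponential normalization.
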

\begin{proof}
  The recursion relation is implied by the factor $1-z(1+y)$ in the
  denominator of the generating function $K(y,z)$.
\end{proof}


\subsection*{Probability distribution and asymptotics}

We define a discrete random variable $K$ which corresponds to the
number of pairs which cross the given pair. The result of Lemma
\ref{Lem:Knp} implies that the probability that $K$ takes the value
$p$ is given by
\begin{equation}\nonumber
{\cal K}_n(p) = \frac{K_{n,p}}{n\,(2n-1)!!}=
\int_0^1 d\alpha \, 2(1-\alpha) \,{n-1\choose p}
\left(2\alpha(1-\alpha)\right)^p \left( 1 -  2\alpha(1-\alpha)\right)^{n-p-1},
\end{equation}
which is an integral over Binomial distributions. In order to compute
the factorial moments of this distribution, we define a generating
function as follows
\begin{equation}\nonumber
  {\cal P}_n(y)=\sum_{p=0}^{n-1} {\cal K}_n(p)\, y^p =
  \int_0^1 d\alpha \, 2(1-\alpha) \, \bigl(
  1 - (1-y) 2\alpha(1-\alpha) \bigr)^{n-1}.
\end{equation}
\begin{lemma}\label{Lem:Kfact}
The $m^\text{th}$ factorial moment of ${\cal K}_n(p)$ is given by
\begin{equation}\nonumber
  \sum_{p=0}^{n-1} \frac{p!}{(p-m)!}\,{\cal K}_n(p)
  = \frac{(n-1)!}{(n-m-1)!}\frac{m!}{(2m+1)!!}.
\end{equation}
In particular this provides the mean $E(K)=(n-1)/3$, and the
variance $\mathrm{Var}(K)=(n-1)(n+8)/45$. 
\end{lemma}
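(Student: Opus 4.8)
The plan is to compute the $m^\text{th}$ factorial moment by applying the $m$-fold derivative operator $(d/dy)^m$ to the probability generating function ${\cal P}_n(y)$ and then evaluating at $y=1$. This is the natural tool because factorial moments $\sum_p \frac{p!}{(p-m)!}{\cal K}_n(p)$ are precisely $\left.\frac{d^m}{dy^m}{\cal P}_n(y)\right|_{y=1}$, and the integral representation of ${\cal P}_n(y)$ from the preceding discussion makes the differentiation transparent.

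First I would differentiate under the integral sign. Writing $u=2\alpha(1-\alpha)$, the integrand is $2(1-\alpha)\bigl(1-(1-y)u\bigr)^{n-1}$, and differentiating $m$ times with respect to $y$ brings down a factor $\frac{(n-1)!}{(n-1-m)!}u^m$ together with a power $\bigl(1-(1-y)u\bigr)^{n-1-m}$. Evaluating at $y=1$ collapses the surviving power to $1$, leaving
\begin{equation}\nonumber
\left.\frac{d^m}{dy^m}{\cal P}_n(y)\right|_{y=1}
= \frac{(n-1)!}{(n-1-m)!}\int_0^1 d\alpha\,2(1-\alpha)\bigl(2\alpha(1-\alpha)\bigr)^m.
\end{equation}
The falling-factorial prefactor $(n-1)!/(n-m-1)!$ matches the claimed answer exactly, so everything reduces to showing that the remaining integral equals $m!/(2m+1)!!$.

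The core computation is therefore the evaluation of $\int_0^1 2(1-\alpha)\,2^m\alpha^m(1-\alpha)^m\,d\alpha = 2^{m+1}\int_0^1 \alpha^m(1-\alpha)^{m+1}d\alpha = 2^{m+1}B(m+1,m+2)$, where $B$ is the Euler Beta function. Using $B(m+1,m+2)=\frac{m!\,(m+1)!}{(2m+2)!}$ and then simplifying the resulting ratio $2^{m+1}m!(m+1)!/(2m+2)!$ via the double-factorial identity $(2m+2)!=(2m+2)!!\,(2m+1)!!=2^{m+1}(m+1)!\,(2m+1)!!$ should collapse the expression cleanly to $m!/(2m+1)!!$.

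I expect the main (though modest) obstacle to be the double-factorial bookkeeping in this last simplification rather than anything conceptual. Once the factorial-moment formula is established, the mean follows from the $m=1$ case, giving $E(K)=(n-1)\cdot\frac{1}{3}$, and the variance follows from combining the $m=2$ factorial moment with $m=1$ via $\mathrm{Var}(K)=E(K(K-1))+E(K)-E(K)^2$; assembling these into the stated $(n-1)(n+8)/45$ is routine algebra and I would simply report the result.
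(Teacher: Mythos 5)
Your proposal is correct and follows essentially the same route as the paper: differentiating the integral representation of ${\cal P}_n(y)$ under the integral sign at $y=1$, reducing to the Beta integral $2^{m+1}B(m+1,m+2)=2^{m+1}\,m!\,(m+1)!/(2m+2)!$, and simplifying via the double-factorial identity to $m!/(2m+1)!!$. The mean and variance computations you outline are also the standard ones and check out, so there is nothing to add.
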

\begin{proof}
\begin{equation}\nonumber
\begin{split}
  &\sum_{p=0}^{n-1} \frac{p!}{(p-m)!}\,{\cal K}_n(p)= \left.\frac{d^m}{dy^m}\right|_{y=1} {\cal P}_n(y)
  = \int_0^1 d\alpha \, 2(1-\alpha) \, \frac{(n-1)!}{(n-m-1)!}
  \left( 2\alpha(1-\alpha)\right)^m\\
  &=2^{m+1}\frac{(n-1)!}{(n-m-1)!}\frac{m!\,(m+1)!}{(2m+2)!}
  = \frac{(n-1)!}{(n-m-1)!}\frac{m!}{(2m+1)!!}.
\end{split}
\end{equation}
\end{proof}

In the limit as $n\to\infty$ we define a continuous real variable
$x=\lim_{n\to\infty}p/(n-1) \in [0,1]$, and an associated continuous
probability distribution
\begin{equation}\nonumber
  {\cal K}(x) = \lim_{n\to\infty} (n-1)\,{\cal K}_n\left((n-1)x\right),
\end{equation}

\begin{theorem}\label{Thm:Kx}
  The asymptotic distribution ${\cal K}(x)$ is given by
\begin{equation}\nonumber
  {\cal K}(x) = \begin{cases}
   1/\sqrt{1-2x}& 0\leq x < 1/2 \\
    0& 1/2 \leq x \leq 1
    \end{cases}.
\end{equation}
\end{theorem}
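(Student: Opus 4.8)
The plan is to compute the limit directly from the integral representation of ${\cal K}_n(p)$ established in Lemma~\ref{Lem:Knp}. Writing $p=(n-1)x$, the factor ${n-1\choose p}\left(2\alpha(1-\alpha)\right)^p\left(1-2\alpha(1-\alpha)\right)^{n-p-1}$ is a Binomial probability mass with success parameter $q(\alpha)=2\alpha(1-\alpha)$ evaluated at $p$ successes out of $n-1$ trials. The key observation is that for large $n$ a Binomial$(n-1,q)$ distribution concentrates sharply around its mean $(n-1)q$, so as a function of $\alpha$ the integrand is appreciable only where $q(\alpha)\approx x$, i.e. where $2\alpha(1-\alpha)=x$. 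Thus I expect the integral over $\alpha$ to localize, via a Laplace/saddle-point mechanism, at the root(s) of $2\alpha(1-\alpha)=x$ in $[0,1]$.

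\medskip

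First I would make the localization precise. Since ${\cal K}(x)=\lim_{n\to\infty}(n-1)\,{\cal K}_n((n-1)x)$, the extra factor of $(n-1)$ exactly compensates the $O(1/\sqrt{n})$ width of the Binomial peak (the local central limit / de~Moivre--Laplace normalization), so the limit should be finite and nonzero precisely when a root $\alpha_*$ of $q(\alpha)=x$ exists in the open interval. The equation $2\alpha(1-\alpha)=x$ has real solutions in $[0,1]$ iff $x\le 1/2$, since $q(\alpha)$ attains its maximum value $1/2$ at $\alpha=1/2$. This is the structural source of the critical value $x=1/2$ and the vanishing of ${\cal K}(x)$ for $x>1/2$: there is simply no $\alpha$ at which the Binomial can concentrate, so the integrand is exponentially small everywhere and the limit is $0$.

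\medskip

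For $x<1/2$ I would carry out the saddle-point evaluation. Changing variables from $\alpha$ to $q=2\alpha(1-\alpha)$ (so that $dq=2(1-2\alpha)\,d\alpha$), the measure $2(1-\alpha)\,d\alpha$ combines with the Jacobian, and the Binomial peak in $q$ of width $O(\sqrt{q(1-q)/n})$ picks out the value $q=x$. After applying de~Moivre--Laplace to approximate the Binomial by a Gaussian in $p$ near its mean $(n-1)q$, integrating the Gaussian against the slowly varying prefactor, and collecting the $(n-1)$ normalization, the surviving factors should assemble into $1/\sqrt{1-2x}$. I anticipate the algebra to pin down the constant will turn out to reproduce the $\sqrt{1-2x}$ in the denominator because $1-2q=(1-2\alpha)^2$ at the relevant point and the Jacobian $|1-2\alpha|=\sqrt{1-2x}$ enters inversely.

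\medskip

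The main obstacle will be making the asymptotic interchange of limit and integral rigorous rather than merely heuristic, and correctly bookkeeping all the $n$-dependent constants so that they combine into exactly $1/\sqrt{1-2x}$ with no stray factors. A cleaner and safer route, which I would prefer for rigor, is to bypass the pointwise saddle-point analysis and instead verify the claimed ${\cal K}(x)$ through its moments: one checks that the continuous density $1/\sqrt{1-2x}$ on $[0,1/2)$ reproduces the limiting (scaled) factorial moments $\tfrac{m!}{(2m+1)!!}$ computed in Lemma~\ref{Lem:Kfact}, i.e. that $\int_0^{1/2} x^m/\sqrt{1-2x}\,dx=\tfrac{m!}{(2m+1)!!}$ after the appropriate normalization, and appeals to a moment-determinacy (Hausdorff) argument to conclude that this is the unique limiting distribution. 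Either way, the combinatorial heavy lifting is already done in Lemma~\ref{Lem:Knp}; the work here is purely analytic.
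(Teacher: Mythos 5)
Your preferred route---verifying that the density $1/\sqrt{1-2x}$ on $[0,1/2)$ reproduces the limiting scaled factorial moments $m!/(2m+1)!!$ of Lemma~\ref{Lem:Kfact} via the Beta integral $\int_0^{1/2}x^m(1-2x)^{-1/2}\,dx$---is exactly the proof the paper gives, with your appeal to Hausdorff moment determinacy making explicit a uniqueness step the paper leaves implicit. Your saddle-point/de Moivre--Laplace sketch is likewise the paper's ``alternative proof,'' carried out there after the substitution $\alpha=\sin^2(\theta/2)$, where the two roots of $2\alpha(1-\alpha)=x$ appear as two saddle points each contributing $\tfrac{1}{2}(1-2x)^{-1/2}$.
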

\begin{proof}
The most satisfying proof of this fact is to show that the large-$n$
limit of the factorial moments is correctly reproduced. To wit,
\begin{equation}\nonumber
  \int_0^{1/2}dx\,\frac{x^m}{\sqrt{1-2x}}
  = \frac{1}{2^{m+1}}\int_0^1 du\, u^{-1/2} (1-u)^m
  =\frac{m!}{(2m+1)!!},
\end{equation}
where we have used the substitution $u = 1 - 2x$. Comparing to Lemma 
(\ref{Lem:Kfact}), we see that in the large-$n$ limit
$\frac{(n-1)!}{(n-m-1)!} \to n^m$, and so we are indeed recovering the
factorial moments correctly.
\end{proof}

\noindent{\bf Alternative proof} Another perspective is to return to the following
representation of the exact distribution
\begin{equation}\nonumber
 \int_0^1 d\alpha \, 2(1-\alpha) \,{n-1\choose p}
\left(2\alpha(1-\alpha)\right)^p \left( 1 -  2\alpha(1-\alpha)\right)^{n-p-1},
\end{equation}
and to use the Normal approximation of the Binomial distribution. When
$\alpha$ is near 0 or 1, this will not be a good approximation, but
this seems to be a set of small enough measure not to impact the
overall approximation for $n\to\infty$. We begin by changing the
integration variable $\alpha = \sin^2{\theta\over 2}$
\begin{equation}\nonumber
 \int_0^{\pi} d\theta \, \sin\theta\, \frac{1+\cos\theta}{2} \,{n-1\choose p}
\left(\frac{\sin^2\theta}{2}\right)^p \left(\frac{1+\cos^2\theta}{2}\right)^{n-p-1} ,
\end{equation}
where we note that $1 + \cos\theta$ may be replaced by $1$ as the rest
of the integrand is even about $\theta=\pi/2$. We now take an integral
over Normal distributions with mean ${1\over 2}(n-1)\sin^2\theta$ and
variance ${1\over 4}(n-1)\sin^2\theta\left(1+\cos^2\theta\right) =
{1\over 4}(n-1)\left(1-\cos^4\theta\right)$
\begin{equation}\nonumber
N(x)=
  \frac{\sqrt{n-1}}{\sqrt{2\pi}} \int_0^\pi \frac{d\theta}{\sqrt{1+\cos^2\theta}}
  \,\mathrm{Exp}\left(\frac{-2\,(n-1)
    \left(x-{1\over 2}\sin^2\theta\right)^2}{1-\cos^4\theta}\right).
\end{equation}
This distribution interpolates between the discrete values of the
actual distribution remarkably well, and the integral over $\theta$
converges well enough to allow for efficient numerical integration for
all values of $x$. It has a tail for $x<0$ which is suppressed for
large $n$. It is straightforward to show that all the moments match
the actual distribution in the strict $n\to\infty$ limit; $N(x)$ also has
the exact mean and variance, and the third moment is correct at ${\cal
  O}(n^{-1})$. Taking the $n\to\infty$ limit, we may use the method of
steepest descent to evaluate the integral. For $x\in[0,1/2)$, there
  are two saddle points located at the following values of $\theta$
\begin{equation}\nonumber
  \theta_0 = \arcsin \sqrt{2x}, \qquad
  \theta_1 = \pi - \arcsin \sqrt{2x},
\end{equation}
which yield the dominant contributions to the
integral\footnote{$\theta=\pi/2$ is also a saddle point, but the
  resulting contribution to the integral is exponentially
  suppressed for $x<1/2$.}. Representing $N(x)$ as
\begin{equation}\nonumber
\int d\theta\,f(\theta) \,e^{(n-1) S(\theta)},
\end{equation}
one finds that
\begin{equation}\nonumber
  \left. \frac{d^2S}{d\theta^2} \right|_{\theta = \theta_0}=
  \left. \frac{d^2S}{d\theta^2} \right|_{\theta = \theta_1}
  = -\frac{4\cos^2\theta_0}{1+\cos^2\theta_0},
\end{equation}
and so the two saddle points contribute the same result,
namely\footnote{Note that $S(\theta_0)=S(\theta_1)=0$.}
\begin{equation}\nonumber
  \begin{split}
 & \frac{\sqrt{2\pi}}{\sqrt{n-1}}\, f(\theta_0)\,
    \left(-\left. \frac{d^2S}{d\theta^2} \right|_{\theta =
      \theta_0}\right)^{-1/2}
    = \frac{\sqrt{2\pi}}{\sqrt{n-1}}\,
    f(\theta_1)\,\left(- \left. \frac{d^2S}{d\theta^2} \right|_{\theta =
      \theta_1}\right)^{-1/2}\\
    &= \frac{1}{2|\cos\theta_0|}
    =
    \frac{1}{2|\cos\theta_1|}=\frac{1}{2}\frac{1}{\sqrt{1-2x}},
  \end{split}
\end{equation}
and so the sum of the two contributions yields the desired result.

\section{Enumeration by contained pairs}

We now enumerate configurations according to the number $p$ of
pairs contained within the given pair. We begin by summing
the result of Proposition \ref{Prop:Knpd} over all possible crossings,
noting that if a contained vertex is not part of a crossing pair, it
is necessarily part of a contained pair. We let $d=2p+k$, so that the
number of crossings $k$ is bounded between $0 \leq k \leq n-p-1$.

\begin{lemma}\label{Lem:Cnp}
  The number $C_{n,p}$ of configurations in which the given pair
  contains $p$ other pairs, is given by
\begin{equation}\nonumber
  C_{n,p}=\sum_{k=0}^{n-p-1} K_{n,k,2p+k} = n\,(2n-1)!!
  \int_0^1 d\alpha \,2(1-\alpha) {n-1 \choose p} \left(\alpha^2\right)^p
  \left(1-\alpha^2\right)^{n-1-p}.
\end{equation}
\end{lemma}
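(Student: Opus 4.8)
The plan is to follow the template of Lemma \ref{Lem:Knp}, now summing Proposition \ref{Prop:Knpd} over the number of crossings $k$ at fixed contained-pair count $p$, with size $d=2p+k$. Substituting and simplifying, the three denominator factorials of Proposition \ref{Prop:Knpd} become $k!$, $p!$, and $(n-p-k-1)!$, so that
\begin{equation}\nonumber
K_{n,k,2p+k} = \frac{2^{k-n+1}\,(2p+k)!\,(2n-2p-k-1)!}{k!\,p!\,(n-p-k-1)!}.
\end{equation}
The structural point is that the two $k$-dependent factorials, $k!$ and $(n-p-k-1)!$, have arguments summing to the constant $n-p-1$; this is exactly the binomial pattern that allowed the analogous sum to close in Lemma \ref{Lem:Knp}.

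Next I would pull out the $k$-independent constant $2^{1-n}/p!$ and insert the Euler Beta integral
\begin{equation}\nonumber
\frac{(2p+k)!\,(2n-2p-k-1)!}{(2n)!} = \int_0^1 d\alpha\,\alpha^{2p+k}(1-\alpha)^{2n-2p-k-1},
\end{equation}
which confines the $k$-dependence to the single factor $(\alpha/(1-\alpha))^k$. Interchanging sum and integral, and noting that the prefactor $2^{k-n+1}$ supplies an extra $2^k$, the sum over $k$ becomes $\tfrac{1}{(n-p-1)!}\sum_k\binom{n-p-1}{k}\bigl(\tfrac{2\alpha}{1-\alpha}\bigr)^k$. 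By the binomial theorem this collapses to $\tfrac{1}{(n-p-1)!}\bigl(\tfrac{1+\alpha}{1-\alpha}\bigr)^{n-p-1}$, the direct analogue of the factor $\bigl((1-\alpha)^2+\alpha^2\bigr)/(1-\alpha)^2$ produced in Lemma \ref{Lem:Knp}. After simplification the $\alpha$-integrand is proportional to $\alpha^{2p}(1-\alpha)^{n-p}(1+\alpha)^{n-p-1}$.

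Finally I would tidy up. Writing $(1-\alpha)^{n-p}(1+\alpha)^{n-p-1} = (1-\alpha)(1-\alpha^2)^{n-1-p}$ recasts the integrand as $(1-\alpha)\,(\alpha^2)^p(1-\alpha^2)^{n-1-p}$, i.e. the size weight $2(1-\alpha)$ (up to the factor $2$) multiplying the Binomial term $\binom{n-1}{p}(\alpha^2)^p(1-\alpha^2)^{n-1-p}$. The remaining bookkeeping is to check that the accumulated constant $2^{1-n}(2n)!/\bigl(p!\,(n-p-1)!\bigr)$ equals $2\,n\,(2n-1)!!\,\binom{n-1}{p}$, which is immediate from $n\,(2n-1)!! = (2n)!/\bigl(2^n(n-1)!\bigr)$. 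I expect the only genuine pitfall to be this last regrouping of the $(1\pm\alpha)$ factors together with the matching of constants; the sum over $k$ is otherwise routine once its binomial character is recognised, exactly as in Lemma \ref{Lem:Knp}.
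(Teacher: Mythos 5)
Your proof is correct and follows essentially the same route as the paper: substitute $d=2p+k$ into Proposition \ref{Prop:Knpd}, insert the Euler Beta integral to decouple the $k$-dependence, collapse the resulting binomial sum to $\left(\frac{1+\alpha}{1-\alpha}\right)^{n-p-1}$, and regroup $(1-\alpha)^{n-p}(1+\alpha)^{n-p-1}=(1-\alpha)\left(1-\alpha^2\right)^{n-1-p}$ before matching constants via $n\,(2n-1)!!=(2n)!/\left(2^n(n-1)!\right)$. One remark: the denominator factorial $(n-p-k-1)!$ in your substituted formula is indeed the right one, even though Proposition \ref{Prop:Knpd} as printed reads $\left(n-1-\frac{d-p}{2}\right)!$ (which would give $(n-1-p)!$ here); that is evidently a typo for $\left(n-1-\frac{d+p}{2}\right)!$, and your version agrees with the form the paper itself uses in the proofs of Lemmas \ref{Lem:Knp} and \ref{Lem:Cnp}.
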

\begin{proof}
We exploit the Euler Beta integral used in the proof of Lemma \ref{Lem:Knp}.
\begin{equation}\nonumber
  \begin{split}
C_{n,p} &=  \frac{2^{k-n+1}\,(2n)!}{p!}\int_0^1 d\alpha \,\sum_{k=0}^{n-p-1}
\frac{\alpha^{2p+k} \, (1 - \alpha)^{2n - 2p - k - 1}}{(n-k-p-1)!\,k!}\\
&= \frac{2^{-n+1}\,(2n)!}{p!}\int_0^1 d\alpha \,
\sum_{k=0}^{n-p-1}
\frac{\alpha^{2p} \, (1 - \alpha)^{2n - 2p - 1}}{(n-k-p-1)!\,k!}\,
\left(\frac{2\alpha}{1-\alpha}\right)^k\\
&=n\,(2n-1)!!\, {n-1\choose p}\int_0^1 d\alpha \, 2(1-\alpha) \,
\alpha^{2p} (1-\alpha)^{n-p}
\left( 1+\alpha\right)^{n-p-1}\\
&= n\,(2n-1)!!
  \int_0^1 d\alpha \,2(1-\alpha) {n-1 \choose p} \left(\alpha^2\right)^p
  \left(1-\alpha^2\right)^{n-1-p}.
\end{split}
\end{equation}
  \end{proof}

\begin{theorem}\label{Thm:Cyz}
  The exponential generating function $C(y,z)$ is given by
  \begin{equation}\nonumber
    C(y,z)=  \sum_{n\geq 1}\sum_{p=0}^{n-1} C_{n,p}\, y^p \,\frac{z^n}{n!}
= \frac{\sqrt{1-2yz}-\sqrt{1-2z}}{(1-2z)(1-y)}.
    \end{equation}
\end{theorem}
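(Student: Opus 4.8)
The plan is to follow exactly the template of the proof of Theorem \ref{Thm:Kyz}, but starting from the integral representation of $C_{n,p}$ supplied by Lemma \ref{Lem:Cnp}. First I would sum over $p$. Since the only $p$-dependence sits in the binomial factor $\binom{n-1}{p}(y\alpha^2)^p(1-\alpha^2)^{n-1-p}$, the binomial theorem collapses the inner sum to $(1-(1-y)\alpha^2)^{n-1}$, giving
\begin{equation}\nonumber
\sum_{p=0}^{n-1} C_{n,p}\,y^p = n\,(2n-1)!!\int_0^1 d\alpha\,2(1-\alpha)\bigl(1-(1-y)\alpha^2\bigr)^{n-1}.
\end{equation}

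Next I would carry out the sum over $n$ against $z^n/n!$. The same elementary identity used in Theorem \ref{Thm:Kyz}, namely $\sum_{n\geq1}\frac{n\,(2n-1)!!}{n!}z^n A^{n-1} = z\,(1-2zA)^{-3/2}$, applies verbatim with $A = 1-(1-y)\alpha^2$. This turns the double sum into the single integral
\begin{equation}\nonumber
C(y,z) = \int_0^1 d\alpha\,\frac{2(1-\alpha)\,z}{\bigl(1-2z+2z(1-y)\alpha^2\bigr)^{3/2}}.
\end{equation}

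The genuinely new work, and the step I expect to be the main obstacle, is the evaluation of this $\alpha$-integral; unlike the $K$ case — where the full integrand happened to be an exact derivative — here I must split it into its $\alpha^0$ and $\alpha^1$ pieces and integrate each separately. Writing $a=1-2z$ and $b=2z(1-y)$, the constant piece integrates via the antiderivative $\alpha/(a\sqrt{a+b\alpha^2})$ to $1/(a\sqrt{a+b})$, while the linear piece integrates, by the substitution $u=a+b\alpha^2$, to $\tfrac1b\bigl(a^{-1/2}-(a+b)^{-1/2}\bigr)$. Both are elementary, but keeping the bookkeeping of the $2z$ and $(1-y)$ prefactors straight is where care is needed.

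Finally I would simplify. The key observation is that $a+b = 1-2zy$, so with the abbreviations $S=\sqrt{1-2z}$ and $T=\sqrt{1-2zy}$ one has the factorization $b = 2z(1-y) = T^2-S^2 = (T-S)(T+S)$. Substituting the two integrals back gives $C(y,z)=\tfrac{2z}{S^2T}-\tfrac{1}{1-y}\cdot\tfrac{T-S}{ST}$, and expressing $2z=(T-S)(T+S)/(1-y)$ via the factorization lets both terms share the common factor $(T-S)/\bigl((1-y)T\bigr)$; the remainder $(T+S)/S^2-1/S$ then collapses to $T/S^2$, leaving
\begin{equation}\nonumber
C(y,z) = \frac{T-S}{(1-y)S^2} = \frac{\sqrt{1-2yz}-\sqrt{1-2z}}{(1-2z)(1-y)},
\end{equation}
as claimed.
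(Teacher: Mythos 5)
Your proposal is correct and follows essentially the same route as the paper's proof: sum over $p$ by the binomial theorem, sum over $n$ against $z^n/n!$ using the identity $\sum_{n\ge 1}\frac{n\,(2n-1)!!}{n!}z^nA^{n-1}=z(1-2zA)^{-3/2}$, and then evaluate the resulting $\alpha$-integral. The only difference is that the paper asserts the value of that final integral without showing work, whereas you carry out the elementary antiderivatives and the $S,T$ simplification explicitly — and your computation checks out.
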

\begin{proof}
  We sum the result of Lemma \ref{Lem:Cnp} against $y^p$ to obtain
  \begin{equation}\nonumber
  \sum_{p=0}^{n-1} C_{n,p}\, y^p =
  n\,(2n-1)!!\int_0^1 d\alpha \, 2(1-\alpha) \, \bigl(
  1 - (1-y) \alpha^2\bigr)^{n-1}.
  \end{equation}
  We then perform the sum over $n$ against $z^n/n!$
  \begin{equation}\nonumber
  \begin{split}
 & \sum_{n,p} C_{n,p}\, y^p \,\frac{z^n}{n!} = \sum_n \frac{n\,(2n-1)!!}{n!}\,
z^n  \int_0^1 d\alpha \, 2(1-\alpha) \, \bigl(
1 - (1-y) \alpha^2 \bigr)^{n-1}\\
&=\int_0^1 d\alpha \, 2(1-\alpha) \frac{z}
{\left(1 - 2 z \left(1 - (1-y) \alpha^2\right)\right)^{3/2}}
= \frac{\sqrt{1-2yz}-\sqrt{1-2z}}{(1-2z)(1-y)}.
\end{split}
\end{equation}
\end{proof}
\begin{table}[t]
\begin{center}
  \begin{tabular}{c|llllll}
  $n$ \textbackslash$p$& 0& 1& 2& 3& 4& 5\\
  \hline  
  1& 1\\
  2& 5& 1\\
  3& 33& 9& 3\\
  4& 279& 87& 39& 15\\
  5& 2895& 975& 495& 255& 105\\
  6& 35685& 12645& 6885& 4005& 2205& 945\\
\end{tabular}
\end{center}
\caption{The numbers $C_{n,p}$, \seqnum{A336599} in the OEIS, to appear. The
  leading diagonal are the double factorials $(2n-3)!!$. The first column is
  \seqnum{A129890}. The second column is \seqnum{A035101}.}
\end{table}
%
\subsection*{Probability distribution and asymptotics}

We define a discrete random variable $C$ which corresponds to the
number of pairs which are contained by the given pair. The result of Lemma
\ref{Lem:Cnp} implies that the probability that $C$ takes the value
$p$ is given by
\begin{equation}\nonumber
{\cal C}_n(p) = \frac{C_{n,p}}{n\,(2n-1)!!}=
\int_0^1 d\alpha \, 2(1-\alpha) \,{n-1\choose p}
\left(\alpha^2\right)^p \left( 1 -  \alpha^2\right)^{n-p-1},
\end{equation}
which is an integral over Binomial distributions. In order to compute
the factorial moments of this distribution, we define a generating
function as follows
\begin{equation}\nonumber
  {\cal P}_n(y)=\sum_{p=0}^{n-1} {\cal C}_n(p)\, y^p =
\int_0^1 d\alpha \, 2(1-\alpha) \, \bigl(
  1 - (1-y) \alpha^2\bigr)^{n-1}.
\end{equation}
\begin{lemma}\label{Lem:Cfact}
The $m^\text{th}$ factorial moment of ${\cal C}_n(p)$ is given by
\begin{equation}\nonumber
  \sum_{p=0}^{n-1} \frac{p!}{(p-m)!}\,{\cal C}_n(p)
=\frac{(n-1)!}{(n-m-1)!}\frac{1}{(m+1)(2m+1) }.
\end{equation}
In particular this provides the mean $E(C)=(n-1)/6$, and the variance
$\mathrm{Var}(C)=(n-1)(7n+11)/180$.
\end{lemma}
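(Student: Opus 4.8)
The plan is to compute the $m^{\text{th}}$ factorial moment by differentiating the probability generating function ${\cal P}_n(y)$ exactly $m$ times and evaluating at $y=1$, in direct parallel with the proof of Lemma \ref{Lem:Kfact}. Concretely, I would start from
\begin{equation}\nonumber
{\cal P}_n(y)= \int_0^1 d\alpha \, 2(1-\alpha) \, \bigl(1 - (1-y)\alpha^2\bigr)^{n-1},
\end{equation}
and observe that $\left.\tfrac{d^m}{dy^m}\right|_{y=1}$ acting on $\bigl(1-(1-y)\alpha^2\bigr)^{n-1}$ brings down the falling factorial $\tfrac{(n-1)!}{(n-m-1)!}$ together with $(\alpha^2)^m$, since every derivative in $y$ produces a factor $\alpha^2$ and the surviving term at $y=1$ is the one where the base has been fully reduced to $1$. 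This leaves the single $\alpha$-integral
\begin{equation}\nonumber
\sum_{p=0}^{n-1}\frac{p!}{(p-m)!}\,{\cal C}_n(p)= \frac{(n-1)!}{(n-m-1)!}\int_0^1 d\alpha\, 2(1-\alpha)\,\alpha^{2m}.
\end{equation}

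The remaining work is to evaluate this elementary integral. Splitting $2(1-\alpha)\alpha^{2m}=2\alpha^{2m}-2\alpha^{2m+1}$ and integrating termwise gives $\tfrac{2}{2m+1}-\tfrac{2}{2m+2}=\tfrac{2}{2m+1}-\tfrac{1}{m+1}$, which simplifies to $\tfrac{1}{(m+1)(2m+1)}$. Thus the factorial moment is $\tfrac{(n-1)!}{(n-m-1)!}\tfrac{1}{(m+1)(2m+1)}$, as claimed.

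For the mean and variance I would specialize this formula. Setting $m=1$ gives $E(C)=(n-1)\cdot\tfrac{1}{2\cdot 3}=(n-1)/6$. Setting $m=2$ gives the second factorial moment $E(C(C-1))=(n-1)(n-2)\cdot\tfrac{1}{3\cdot 5}=(n-1)(n-2)/15$, from which $\operatorname{Var}(C)=E(C(C-1))+E(C)-E(C)^2$. Assembling these over a common denominator of $180$ and simplifying the resulting quadratic in $n$ should produce $(n-1)(7n+11)/180$; I would double-check this final algebra since it is the one place a sign or coefficient error could creep in.

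There is no real conceptual obstacle here: the interchange of differentiation and integration is justified because the integrand is a polynomial in $y$ of fixed degree, and the identity $2\int_0^1(1-\alpha)\alpha^{2m}\,d\alpha=\tfrac{1}{(m+1)(2m+1)}$ is a one-line computation. The only mild care needed is the bookkeeping in the steepest part, namely the simplification of the variance expression, where one must correctly combine the factorial-moment terms rather than confuse factorial moments with ordinary moments.
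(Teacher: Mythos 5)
Your proposal is correct and follows essentially the same route as the paper: differentiate the integral representation ${\cal P}_n(y)=\int_0^1 d\alpha\,2(1-\alpha)\bigl(1-(1-y)\alpha^2\bigr)^{n-1}$ exactly $m$ times at $y=1$ to pull out $\tfrac{(n-1)!}{(n-m-1)!}\alpha^{2m}$, then evaluate the elementary Beta-type integral to get $\tfrac{1}{(m+1)(2m+1)}$. Your variance check via $\operatorname{Var}(C)=E(C(C-1))+E(C)-E(C)^2$ with $m=1,2$ also comes out correctly to $(n-1)(7n+11)/180$, which the paper states without detail.
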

\begin{proof}
\begin{equation}\nonumber
\begin{split}
  &\sum_{p=0}^{n-1} \frac{p!}{(p-m)!}\,{\cal C}_n(p)
  = \left.\frac{d^m}{dy^m}\right|_{y=1} {\cal P}_n(y)
  = \int_0^1 d\alpha \, 2(1-\alpha) \, \frac{(n-1)!}{(n-m-1)!}
  \,\alpha^{2m}\\
  &=\frac{(n-1)!}{(n-m-1)!}\frac{1}{(m+1)(2m+1) }.
\end{split}
\end{equation}
\end{proof}

In the limit as $n\to\infty$ we define a continuous real variable
$x=\lim_{n\to\infty}p/(n-1) \in [0,1]$, and an associated continuous
probability distribution
\begin{equation}\nonumber
  {\cal C}(x) = \lim_{n\to\infty} (n-1)\,{\cal C}_n\left((n-1)x\right),
\end{equation}

\begin{theorem}\label{Thm:Cx}
  The asymptotic distribution ${\cal C}(x)$ is given by
\begin{equation}\nonumber
  {\cal C}(x) = \frac{1}{\sqrt{x}}-1.
\end{equation}
\end{theorem}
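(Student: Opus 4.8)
The plan is to mirror the strategy used for $\mathcal{K}(x)$ in Theorem \ref{Thm:Kx}: rather than attempt a pointwise limit of the binomial integral directly, I would verify that the proposed density reproduces the correct asymptotic factorial moments computed in Lemma \ref{Lem:Cfact}. Since the factorial moments determine the distribution (the candidate density decays fast enough near $x=0$ that the moment problem is determinate on $[0,1]$), matching all of them suffices. Concretely, I would compute the raw moments of the candidate density $\mathcal{C}(x)=x^{-1/2}-1$ on $(0,1]$ and compare with the large-$n$ form of the factorial moments.

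First I would evaluate
\begin{equation}\nonumber
  \int_0^1 x^m\,\mathcal{C}(x)\,dx
  = \int_0^1 x^{m-1/2}\,dx - \int_0^1 x^m\,dx
  = \frac{1}{m+\tfrac12} - \frac{1}{m+1}
  = \frac{2}{2m+1}-\frac{1}{m+1},
\end{equation}
which simplifies to $\tfrac{1}{(m+1)(2m+1)}$. Then I would invoke the large-$n$ scaling argument exactly as in the $\mathcal{K}$ case: the $m$-th factorial moment of $\mathcal{C}_n(p)$ from Lemma \ref{Lem:Cfact} is $\tfrac{(n-1)!}{(n-m-1)!}\tfrac{1}{(m+1)(2m+1)}$, and since $\tfrac{(n-1)!}{(n-m-1)!}\to n^m$ while $p/(n-1)\to x$, the factorial moment and the $m$-th raw moment of the scaled variable coincide in the limit. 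This shows the limiting density has raw moments $\tfrac{1}{(m+1)(2m+1)}$, matching those of $x^{-1/2}-1$ for every $m\geq 0$ (including $m=0$, giving total mass $1$).

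The main obstacle is justifying that matching the factorial moments genuinely identifies the limit distribution as $\mathcal{C}(x)$, i.e. that the moment sequence determines a unique distribution supported on $[0,1]$. Because the support is bounded, the Hausdorff moment problem is determinate, so this is not a serious analytic gap—but I would at least remark on it, since the candidate density is unbounded as $x\to0^+$ and one wants to be sure the integrable singularity causes no trouble. One should also confirm that the $m=0$ normalization holds and that $\mathcal{C}$ is nonnegative on $(0,1]$, both of which are immediate from the closed form. If a more direct route is preferred, one could alternatively substitute $\alpha=\sqrt{u}$ (or proceed via the Normal/steepest-descent approximation as in the alternative proof of Theorem \ref{Thm:Kx}) to extract the density from the binomial integral, but the moment-matching argument is the cleanest and most self-contained, so I would present that as the primary proof.
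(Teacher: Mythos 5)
Your proposal is correct and follows essentially the same route as the paper: verifying that the candidate density $x^{-1/2}-1$ reproduces the limiting factorial moments of Lemma \ref{Lem:Cfact}, using $\frac{(n-1)!}{(n-m-1)!}\to n^m$, with the same integral evaluation $\int_0^1 x^m\left(x^{-1/2}-1\right)dx=\frac{1}{(m+1)(2m+1)}$. Your added remarks on determinacy of the Hausdorff moment problem and the integrable singularity at $x=0$ supply rigor the paper leaves implicit, and your suggested steepest-descent alternative is likewise the paper's own secondary argument.
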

\begin{proof}
The most satisfying proof of this fact is to show that the large-$n$
limits of the factorial moments are correctly reproduced. To wit,
\begin{equation}\nonumber
  \int_0^{1}dx\,x^m \left(\frac{1}{\sqrt{x}}-1\right)
 =\frac{1}{(m+1)(2m+1) }.
\end{equation}
Comparing to the result of Lemma \ref{Lem:Cfact}, we see that in the
large-$n$ limit $\frac{(n-1)!}{(n-m-1)!} \to n^m$, and so we are
indeed recovering the factorial moments correctly.
\end{proof}

\noindent{\bf Alternative proof} We use the same method presented in
the alternate proof of Theorem \ref{Thm:Kx}. Beginning with the exact distribution
\begin{equation}\nonumber
 \int_0^1 d\alpha \, 2(1-\alpha) \,{n-1\choose p}
\left(\alpha^2\right)^p \left( 1 -  \alpha^2\right)^{n-p-1},
\end{equation}
we approximate using an integral over Normal distributions with mean
$(n-1)\alpha^2$ and variance $(n-1)\alpha^2(1-\alpha^2)$
\begin{equation}\nonumber
  N(x) = \sqrt{\frac{n-1}{2\pi}} \int_0^1 d\alpha\,
  \frac{2(1-\alpha)}{\sqrt{\alpha^2(1-\alpha^2)}}
  \,\mathrm{Exp}\left( -\frac{(n-1)\left(x-\alpha^2\right)^2}
         {2\alpha^2(1-\alpha^2)}\right).
\end{equation}
There is a single saddle point at $\alpha=\alpha_0=\sqrt{x}$, and the
method of steepest descent proceeds as follows. Representing $N(x)$ as
\begin{equation}\nonumber
\int d\alpha\,f(\alpha) \,e^{(n-1) S(\alpha)},
\end{equation}
one finds that
\begin{equation}\nonumber
  \left. \frac{d^2S}{d\alpha^2} \right|_{\alpha = \alpha_0}=
  -\frac{4\alpha_0^2}{\alpha_0^2(1-\alpha_0^2)}.
\end{equation}
The contribution to the integral is then 
\begin{equation}\nonumber
  \frac{\sqrt{2\pi}}{\sqrt{n-1}}\,
  f(\alpha_0)\,
  \left(-\left. \frac{d^2S}{d\alpha^2} \right|_{\alpha = \alpha_0}\right)^{-1/2}
  =\frac{1-\alpha_0}{\alpha_0} = \frac{1}{\sqrt{x}}-1,
\end{equation}
where we have used the fact that $S(\alpha_0)=0$.
\section{Enumeration by containing pairs}

We remind the reader that a {\it containing} pair as a pair whose left
endpoint is left of the given pair's left endpoint, and whose right
endpoint is right of the given pair's right endpoint.

\begin{figure}[ht]
\begin{center}
 \includegraphics[bb=150 595 420 670, clip=true, width=4.0in]{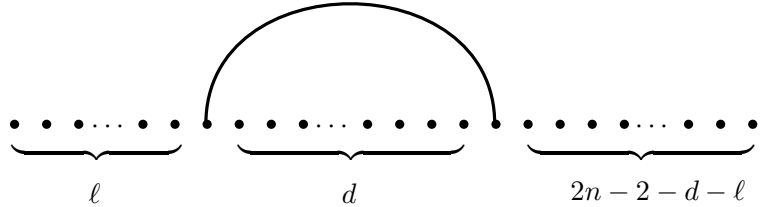}
\end{center}
\caption{Parameters used in the proof of Proposition \ref{Prop:Hnq};
  the given pair is indicated by the arc.}
\label{Fig:Hnq}
\end{figure}
\begin{proposition}\label{Prop:Hnq}
  The number $H_{n,q}$ of configurations with at least $q$ containing pairs is given by
  \begin{equation}\nonumber
    H_{n,q} = \sum_{d=0}^{2n-2-2q} \sum_{\ell = q}^{2n-d-2-q}
    {\ell \choose q}{2n-d-\ell-2\choose q}\,q!\,(2n-2q-3)!!.
    \end{equation}
\end{proposition}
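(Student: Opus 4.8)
The plan is to count directly, organizing configurations by the geometry of the given pair rather than by any generating-function manipulation. I would first record that a given pair of size $d$ partitions the remaining $2n-2$ vertices into three regions: the $d$ contained vertices lying between its endpoints, a left region of some size $\ell$, and a right region of the complementary size $2n-2-d-\ell$. The key observation is that once $d$ and $\ell$ are fixed, the placement of the given pair in the line is completely determined (its left endpoint sits immediately after the $\ell$ left vertices), so summing over $0\le\ell\le 2n-d-2$ reproduces exactly the $2n-d-1$ admissible positions of a pair of size $d$ noted earlier, with no double counting. This is what makes the statement naturally a double sum over $d$ and $\ell$.

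Next I would count, for fixed $d$ and $\ell$, the ways to produce $q$ designated containing pairs together with a free completion of the diagram. A containing pair must have one endpoint in the left region and one endpoint in the right region, so selecting $q$ of them amounts to choosing $q$ left vertices, $q$ right vertices, and a bijection between the two selections; this contributes $\binom{\ell}{q}\binom{2n-d-\ell-2}{q}\,q!$. The remaining $2n-2-2q$ vertices (the unused contained, left, and right vertices) are then matched in all possible ways, giving $(2n-2q-3)!!$ linear chord diagrams. Taking the product of these factors and summing yields the claimed formula; the inner range $q\le\ell\le 2n-d-2-q$ is forced by requiring both outer regions to contain at least $q$ vertices, and the outer range $0\le d\le 2n-2-2q$ is forced by non-emptiness of the inner sum.

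The conceptual point I expect to be the main obstacle—rather than any calculation—is the precise meaning of ``at least $q$''. Because the remaining vertices are matched with no constraints, a completion may create further containing pairs, so what is really being enumerated is the set of incidences between a configuration and a choice of $q$ of its containing pairs. Equivalently, $H_{n,q}=\sum_{p\ge q}\binom{p}{q}G_{n,p}$ is the $q$-th binomial moment of the exact distribution $G_{n,p}$, which is exactly the object needed afterwards to extract $G(y,z)$ through the binomial transform $\sum_p G_{n,p}\,y^p=\sum_q H_{n,q}\,(y-1)^q$. Keeping this marking interpretation explicit is essential, since it is what justifies completing the diagram freely and prevents the error of assuming the remaining matching produces no additional containing pairs.
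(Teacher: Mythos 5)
Your proof is correct and follows essentially the same route as the paper's: parameterize configurations by the size $d$ and left offset $\ell$ of the given pair, choose $q$ left and $q$ right vertices and match them in $\binom{\ell}{q}\binom{2n-d-\ell-2}{q}\,q!$ ways, and complete the remaining $2n-2-2q$ vertices freely in $(2n-2q-3)!!$ ways. Your explicit remark that ``at least $q$'' really means counting configurations with a \emph{marked} set of $q$ containing pairs, i.e.\ that $H_{n,q}=\sum_{p\geq q}\binom{p}{q}G_{n,p}$ is a binomial moment rather than a tail count, is left implicit in the paper but is precisely what legitimizes both the free completion here and the later extraction $G_{n,p}=[y^p]H_n(y-1)$.
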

\begin{proof}
We begin by parameterising the size and position of the given pair as
indicated in Figure \ref{Fig:Hnq}. We select $q$ vertices from the set
of $\ell$ vertices to the left of the given pair, and also a further
$q$ vertices from the set of $2n-2-d-\ell$ vertices to the right of
the given pair, then match them in all possible ways. The remaining
vertices are matched amongst themselves in all possible ways. We note that
\begin{itemize}
\item There are ${\ell \choose q}$ ways of selecting $q$ vertices from the $\ell$.
\item There are ${2n-d-\ell-2\choose q}$ ways of selecting $q$
  vertices from the  $2n-2-d-\ell$.
\item There are $q!$ ways of matching these two sets of $q$ vertices.
\item There are $(2n-2q-3)!!$ ways to match the remaining $2n-2q-2$ vertices.
\end{itemize}
These enumerations correspond to the factors of the summand; the sum
is over all possible values of position $\ell$ and size $d$ of the given pair.
\end{proof}
\begin{table}[t]
\begin{center}
  \begin{tabular}{c|llllll}
  $n$ \textbackslash$p$& 0& 1& 2& 3& 4& 5\\
  \hline  
  1& 1\\
  2& 5& 1\\
  3& 32& 11& 2\\
  4& 260& 116& 38& 6\\
  5& 2589& 1344& 594& 174& 24\\
  6& 30669& 17529& 9294& 3774& 984& 120\\
\end{tabular}
\end{center}
\caption{The numbers $G_{n,p}$, \seqnum{A336600} in the OEIS, to appear. The
  leading diagonal are the factorials $(n-1)!$. The first sub-leading diagonal
  is \seqnum{A001344}.}
\end{table}
\begin{lemma}\label{Lem:Gnp}
  The number $G_{n,p}$ of configurations with exactly $p$ containing pairs is given by
  \begin{equation}\nonumber
  G_{n,p}=n\,(2n-1)!!\,[y^p] \int_0^1 d\alpha\,
  \frac{\left(1+2\alpha(1-\alpha)(y-1)\right)^n-1}{2n\alpha(1-\alpha)(y-1)}.
  \end{equation}
\end{lemma}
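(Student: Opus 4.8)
The plan is to invert the cumulative statistic $H_{n,q}$ of Proposition \ref{Prop:Hnq} into the exact statistic $G_{n,p}$, and then to identify the resulting generating polynomial with the claimed integral. The bridge is the observation that a configuration having exactly $p$ containing pairs is counted precisely $\binom{p}{q}$ times by $H_{n,q}$, since producing a configuration counted by $H_{n,q}$ amounts to distinguishing $q$ of its containing pairs (the remaining vertices then being matched freely). Hence $H_{n,q}=\sum_{p\ge q}\binom{p}{q}\,G_{n,p}$, and multiplying by $(y-1)^q$, summing over $q$, and using $\sum_{q}\binom{p}{q}(y-1)^q=y^p$ collapses the inner sum to the clean identity
\[
  \sum_{p=0}^{n-1} G_{n,p}\,y^p=\sum_{q\ge 0} H_{n,q}\,(y-1)^q .
\]
It then suffices to show the right-hand side equals $n\,(2n-1)!!\int_0^1 d\alpha\,\bigl((1+2\alpha(1-\alpha)(y-1))^n-1\bigr)\big/\bigl(2n\alpha(1-\alpha)(y-1)\bigr)$, after which the lemma follows by extracting $[y^p]$.

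Second, I would evaluate $H_{n,q}$ in closed form straight from Proposition \ref{Prop:Hnq}. Writing $M=2n-d-2$ for the number of free vertices at fixed size $d$, the inner sum over $\ell$ is the Vandermonde-type convolution $\sum_{\ell}\binom{\ell}{q}\binom{M-\ell}{q}=\binom{M+1}{2q+1}$, and the remaining sum over $d$ is a hockey-stick sum, $\sum_{d=0}^{2n-2-2q}\binom{2n-d-1}{2q+1}=\binom{2n}{2q+2}$, giving
\[
  H_{n,q}=q!\,(2n-2q-3)!!\,\binom{2n}{2q+2}.
\]
A quick check for $n=2$, where $G_{2,0}=5$ and $G_{2,1}=1$ so that $5+y=6+(y-1)$, confirms $H_{2,0}=6$ and $H_{2,1}=1$, with the convention $(-1)!!=1$.

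Finally, I would expand the claimed integral as a power series in $(y-1)$ and match coefficients. With $w=2\alpha(1-\alpha)(y-1)$, the binomial theorem gives $\bigl((1+w)^n-1\bigr)/(nw)=\tfrac1n\sum_{q\ge0}\binom{n}{q+1}\bigl(2\alpha(1-\alpha)\bigr)^q(y-1)^q$, and the elementary Beta integral $\int_0^1\bigl(2\alpha(1-\alpha)\bigr)^q\,d\alpha=2^q(q!)^2/(2q+1)!$ makes the $\alpha$-integration explicit, so that $n\,(2n-1)!!$ times the integral equals $\sum_{q}(2n-1)!!\binom{n}{q+1}2^q\frac{(q!)^2}{(2q+1)!}(y-1)^q$. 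The proof then reduces to the factorial identity
\[
  q!\,(2n-2q-3)!!\,\binom{2n}{2q+2}=(2n-1)!!\,\binom{n}{q+1}\,2^q\,\frac{(q!)^2}{(2q+1)!},
\]
which follows from $(2n-1)!!=(2n)!/(2^n n!)$ together with $(2n-2q-3)!!=(2n-2q-2)!/(2^{n-q-1}(n-q-1)!)$. The main obstacle is the closed-form evaluation of the double sum defining $H_{n,q}$: one must justify the Vandermonde and hockey-stick collapses and, in particular, control the boundary case $q=n-1$, where the double factorial degenerates to $(-1)!!$ and the summation ranges for $\ell$ and $d$ each shrink to a single term.
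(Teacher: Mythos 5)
Your proof is correct, and it takes a genuinely different route through the middle of the argument than the paper does. Both proofs share the same skeleton: start from Proposition \ref{Prop:Hnq} and convert the ``at least'' counts $H_{n,q}$ into the exact counts $G_{n,p}$ via $\sum_p G_{n,p}\,y^p=\sum_q H_{n,q}\,(y-1)^q$; the paper simply cites Wilf for this inclusion-exclusion step, so your explicit justification that a configuration with exactly $p$ containing pairs is counted $\binom{p}{q}$ times by $H_{n,q}$ is a welcome addition, since the paper's ``at least $q$'' phrasing really means ``counted with multiplicity.'' Where you diverge is in handling the double sum defining $H_{n,q}$. The paper never evaluates it in closed form: it represents the factorial ratios by Euler Beta integrals, interchanges summation and integration, performs the binomial sum over $m=\ell-q$ and then the sum over $d$ under the integral sign, arriving at $H_{n,q}=n\,(2n-1)!!\int_0^1 d\alpha\,\tfrac{1}{q+1}\binom{n-1}{q}\bigl(2\alpha(1-\alpha)\bigr)^q$, so that summing over $q$ against $y^q$ produces the integral form of the lemma directly. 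You instead collapse the double sum purely combinatorially (the Vandermonde-type convolution $\sum_{\ell}\binom{\ell}{q}\binom{M-\ell}{q}=\binom{M+1}{2q+1}$ followed by the hockey-stick identity), obtaining the closed form $H_{n,q}=q!\,(2n-2q-3)!!\,\binom{2n}{2q+2}$, and then verify the lemma by expanding the claimed integral in powers of $(y-1)$ and matching coefficients via a factorial identity --- which does hold, as your two double-factorial substitutions show, including at the boundary $q=n-1$ with $(-1)!!=1$. Your route buys an explicit closed form for $H_{n,q}$ that the paper never writes down, and every step is a named finite identity requiring no manipulation of integrals against sums; its only drawback is that the final step is verificatory rather than constructive (one must already know the integral formula to check it). The paper's route derives the integral representation organically, and its Beta-integral device is the workhorse reused for all four statistics in the paper, producing precisely the ``integral over binomial distributions'' form that drives the subsequent asymptotic analysis.
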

\begin{proof}
We begin with the result of Proposition \ref{Prop:Hnq}, and shift the
summation variable, defining $m=\ell-q$, so that
\begin{equation}\nonumber
  H_{n,q} = \sum_{d=0}^{2n-2-2q} \sum_{m = 0}^{2n-2q-d-2} \frac{(m+q)!}{m!\,q!}
  \frac{(2n-d-m-q-2)!}{(2n-d-m-2q-2)!}\, (2n-2q-3)!!.
\end{equation}
We exploit the Euler Beta integral used in the proof of Lemma \ref{Lem:Knp} to obtain
\begin{equation}\nonumber
\begin{split}
  H_{n,q} &= \sum_{d=0}^{2n-2-2q} \sum_{m = 0}^{2n-2q-d-2}\int_0^1d\alpha\,
  \frac{\alpha^{m+q}}{m!\,q!}
  \frac{(1-\alpha)^{2n-d-m-q-2}\,(2n-d-1)!}{(2n-d-m-2q-2)!}\, (2n-2q-3)!!\\
& =
  \sum_{d=0}^{2n-2-2q}\int_0^1d\alpha\,
  \frac{\alpha^{q}(1-\alpha)^{2n-d-q-2}}{(2n-d-2q-2)!}
    \frac{(2n-d-1)!}{q!} \, (2n-2q-3)!!\\
    & \qquad \times\sum_{m =
      0}^{2n-2q-d-2}{2n-d-2q-2\choose
      m}\left(\frac{\alpha}{1-\alpha}\right)^m ,
\end{split}
  \end{equation}
where we have rearranged the summand to make the binomial nature of
the sum over $m$ manifest; performing this sum we obtain
\begin{equation}\nonumber
\begin{split}
  H_{n,q} &=
  \sum_{d=0}^{2n-2-2q}\int_0^1d\alpha\,
  \frac{\alpha^{q}(1-\alpha)^{2n-d-q-2}}{(2n-d-2q-2)!}
    \frac{(2n-d-1)!}{q!} \, (2n-2q-3)!!
    \left(1+\frac{\alpha}{1-\alpha}\right)^{2n-d-2q-2}\\
    &=  \frac{(2n-2q-3)!!}{q!}
    \int_0^1d\alpha\,
  \alpha^{q}(1-\alpha)^{q}
    \sum_{d=0}^{2n-2-2q}
    \frac{(2n-d-1)!}{(2n-d-2q-2)!}\\
    &= \frac{(2n-2q-3)!!}{q!}
    \int_0^1d\alpha\,
    \alpha^{q}(1-\alpha)^{q}\, \frac{n(2n-1)!}{(q+1)(2n-2q-2)!}\\
    &=n\,(2n-1)!!
    \int_0^1d\alpha\,\frac{1}{q+1}{n-1 \choose q}
\left(2\alpha(1-\alpha)\right)^q .
\end{split}
\end{equation}
We now form a generating function by summing over $q$ against $y^q$
\begin{equation}\nonumber
  H_n(y) = \sum_{q=0}^{n-1} H_{n,q}\,y^q =n\,(2n-1)!!
  \int_0^1 d\alpha\,
  \frac{\left(1+2\alpha(1-\alpha)y\right)^n-1}{2n\alpha(1-\alpha)y}.
\end{equation}
Finally we note that by inclusion-exclusion (c.f. \cite{Wilf}),
$G_{n,p}=[y^p]H_n(y-1)$, which yields the desired result.
\end{proof}
\begin{theorem}\label{Thm:Gyz}
The exponential generating function for the
numbers $G_{n,p}$ is given by
\begin{equation}\nonumber
  \sum_{n,p} G_{n,p}\frac{z^n}{n!} y^p =
  \frac{1}{(1-y)\sqrt{1-2z}}\ln\left(\frac{1-z(1+y)}{1-2z}\right).
\end{equation}
\end{theorem}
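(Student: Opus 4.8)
The plan is to follow the same template as the proofs of Theorems \ref{Thm:Kyz} and \ref{Thm:Cyz}: sum the integral representation of Lemma \ref{Lem:Gnp} over $p$ to recover $\sum_p G_{n,p}\,y^p$, then sum over $n$ against $z^n/n!$, and finally evaluate the remaining $\alpha$-integral in closed form. Summing Lemma \ref{Lem:Gnp} over $p$ merely removes the coefficient extraction $[y^p]$. Writing $\beta := 2\alpha(1-\alpha)(y-1)$, the denominator $2n\alpha(1-\alpha)(y-1)$ equals $n\beta$, so the explicit factor of $n$ cancels and one is left with
\[
\sum_{p}G_{n,p}\,y^p = (2n-1)!!\int_0^1 d\alpha\,\frac{(1+\beta)^n-1}{\beta}.
\]

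Next I would carry out the sum over $n$. The only generating-function input needed is the elementary identity $\sum_{n\geq 1}\frac{(2n-1)!!}{n!}\,w^n = (1-2w)^{-1/2}-1$, which follows from $\frac{(2n-1)!!}{n!}=2^{-n}\binom{2n}{n}$. Applying it with $w=z(1+\beta)$ and with $w=z$, and interchanging sum and integral, gives
\[
G(y,z)=\int_0^1\frac{d\alpha}{\beta}\left[\frac{1}{\sqrt{1-2z(1+\beta)}}-\frac{1}{\sqrt{1-2z}}\right],
\]
where the two subtracted constants cancel. The $-1$ in the numerator of Lemma \ref{Lem:Gnp} is precisely what forces both series to start at $n=1$ and produces this clean difference.

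The substantive step is the $\alpha$-integral. I would change variables to $w=1-2\alpha$, so that $2\alpha(1-\alpha)=(1-w^2)/2$ and the integrand becomes even in $w$. Introducing $P:=1-z(1+y)$ and $Q:=z(1-y)$, one checks that $1-2z(1+\beta)=P-Qw^2$ and $1-2z=P-Q$, whence
\[
G(y,z)=\frac{2}{y-1}\int_0^1\frac{dw}{1-w^2}\left[\frac{1}{\sqrt{P-Qw^2}}-\frac{1}{\sqrt{P-Q}}\right].
\]
The key antiderivative is $\int\frac{dw}{(1-w^2)\sqrt{P-Qw^2}}=\frac{1}{\sqrt{P-Q}}\,\tanh^{-1}\!\frac{\sqrt{P-Q}\,w}{\sqrt{P-Qw^2}}$, verified by differentiation; the constant $\sqrt{P-Q}$ appearing inside is forced by requiring the resulting denominator to be proportional to $1-w^2$. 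Each of the two pieces diverges logarithmically as $w\to1$, but the subtraction renders the combination finite.

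The main obstacle is thus the boundary behaviour at $w=1$, where both $\tanh^{-1}$ arguments tend to $1$. I would expand $w=1-\epsilon$ and track the difference of logarithms, finding that the ratio controlling the divergence tends to $(P-Q)/P$, so the combination converges to $\tfrac12\ln\frac{P-Q}{P}$; the lower endpoint $w=0$ contributes nothing. Assembling, the two sign flips coming from $\tfrac{1}{y-1}=-\tfrac{1}{1-y}$ and $\ln\frac{P-Q}{P}=-\ln\frac{P}{P-Q}$ conspire to give
\[
G(y,z)=\frac{1}{(1-y)\sqrt{1-2z}}\ln\frac{1-z(1+y)}{1-2z},
\]
as claimed. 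Throughout I would work in the regime $z<1/2$, where $P-Q=1-2z>0$ and the square roots are real and positive, so all branch choices are unambiguous and the $w\to1$ limit is legitimate.
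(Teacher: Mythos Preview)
Your proof is correct. Up through the line
\[
G(y,z)=\int_0^1\frac{d\alpha}{\beta}\left[\frac{1}{\sqrt{1-2z(1+\beta)}}-\frac{1}{\sqrt{1-2z}}\right]
\]
you and the paper are doing the same thing. The divergence is in how the $\alpha$-integral is evaluated. The paper introduces a Feynman parameter $\beta'$ via the identity $A^{-1}B^{-1/2}=\tfrac12\int_0^1(1-\beta')^{-1/2}(\beta' A+(1-\beta')B)^{-3/2}\,d\beta'$ to merge the $1/\beta$ with each inverse square root, then integrates $\alpha$ (now an elementary power integral) first and $\beta'$ second; the apparent singularity at $\beta'=1$ cancels between the two terms before the final integration. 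You instead symmetrise with $w=1-2\alpha$, produce the explicit antiderivative $\tfrac{1}{\sqrt{P-Q}}\tanh^{-1}\!\tfrac{\sqrt{P-Q}\,w}{\sqrt{P-Qw^2}}$, and extract the finite answer as a limit $w\to1$ of a difference of divergent $\tanh^{-1}$'s. Your route is more elementary (no auxiliary integration) but demands the careful boundary expansion you sketched; the paper's route trades that limit calculation for an extra layer of integration in which the cancellation is algebraic rather than asymptotic. Both are clean, and your version has the minor advantage that the same substitution $w=1-2\alpha$ is exactly what the paper uses later in the proof of Theorem~\ref{Thm:Xyz}, so your method unifies the two computations.
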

\begin{proof}
We sum the result of Lemma \ref{Lem:Gnp} against $z^n/n!$, and then
perform the integral over $\alpha$
\begin{equation}\nonumber
  \begin{split}
  &\sum_{n,p} G_{n,p}\frac{z^n}{n!} y^p
  =\int_0^1 d\alpha\,\sum_n \frac{(2n-1)!!}{n!}
  \frac{\left(1+2\alpha(1-\alpha)(y-1)\right)^n-1}{2\alpha(1-\alpha)(y-1)}\\
  &=\int_0^1 d\alpha\,\frac{1}{2\alpha(1-\alpha)(y-1)}\left(
  \frac{1}{\sqrt{1-2z\left(1+2\alpha(1-\alpha)(y-1)\right)}}
  -\frac{1}{\sqrt{1-2z}}\right).
  \end{split}
\end{equation}
We use a Feynman parameter (c.f. \cite{S}) $\beta$ to combine the
denominator outside the parenthesis with those inside
\begin{equation}\nonumber
  \begin{split}
    \frac{1}{2(y-1)}\int_0^1d\beta\,\frac{1}{\sqrt{1-\beta}} \int_0^1 d\alpha\,
    &\Biggl( \frac{1}{\left(2\alpha(1-\alpha)\beta + (1-\beta)
      \left(1-2z\left(1+2\alpha(1-\alpha)(y-1)\right)\right)\right)^{3/2}}\\
    &- \frac{1}{\left(2\alpha(1-\alpha)\beta + (1-\beta)
      (1-2z)\right)^{3/2}} \Biggr).
 \end{split}
\end{equation}
The integral over $\alpha$ is straightforward and yields
\begin{equation}\nonumber
\begin{split}
  \frac{1}{(y-1)\sqrt{1-2z}}\int_0^1d\beta\,\Biggl(
  \frac{1}{(1-\beta)
    \left(2 - \beta-2(1-\beta)(1+y)z\right)}
  -\frac{1}{(1-\beta)
    \left(2 - \beta-4(1-\beta)z\right)}\Biggr),
  \end{split}
\end{equation}
where the apparent singularity at $\beta = 1$ cancels between the two
terms. The integration over $\beta$ is trivial and yields the desired
result.
\end{proof}

\subsection*{Probability distribution and asymptotics}

We define a discrete random variable $G$ which corresponds to the
number of pairs which are contained by the given pair. The result of Lemma
\ref{Lem:Gnp} implies that the probability that $G$ takes the value
$p$ is given by
\begin{equation}\nonumber
{\cal G}_{n,p}=[y^p] \int_0^1 d\alpha\,
  \frac{\left(1+2\alpha(1-\alpha)(y-1)\right)^n-1}{2n\alpha(1-\alpha)(y-1)}.
\end{equation}
In order to compute the factorial moments of this distribution, we
define a generating function as follows
\begin{equation}\nonumber
  \begin{split}
  {\cal P}_n(y)=\sum_{p=0}^{n-1} {\cal G}_n(p)\, y^p &= \int_0^1
  d\alpha\,
  \frac{\left(1+2\alpha(1-\alpha)(y-1)\right)^n-1}{2n\alpha(1-\alpha)(y-1)}\\
  &=\int_0^1 d\alpha\,\sum_{m=0}^{n-1}
  \left(2\alpha(1-\alpha)(y-1)\right)^m{n-1\choose m}\frac{1}{m+1}\\
  &=\sum_{m=0}^{n-1}
  \frac{2^m(m!)^2}{(2m+1)!}(y-1)^m{n-1\choose m}\frac{1}{m+1}\\
  &=\sum_{m=0}^{n-1}\frac{(n-1)!}{(n-m-1)!}\frac{2^mm!}{(m+1)(2m+1)! }(y-1)^m.
\end{split}
  \end{equation}
\begin{lemma}\label{Lem:Gfact}
The $m^\text{th}$ factorial moment of ${\cal G}_n(p)$ is given by
\begin{equation}\nonumber
\sum_{p=0}^{n-1} \frac{p!}{(p-m)!}\,{\cal G}_n(p)=
  \frac{(n-1)!}{(n-m-1)!}\frac{m!}{(m+1)(2m+1)!! }.
\end{equation}
In particular this provides the mean $E(G)=(n-1)/6$, and the variance
$\mathrm{Var}(G)=(n-1)(3n+19)/180$.
\end{lemma}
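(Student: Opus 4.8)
The plan is to follow the same strategy as in the proofs of Lemmas~\ref{Lem:Kfact} and~\ref{Lem:Cfact}: the $m^\text{th}$ factorial moment of a discrete distribution equals the $m^\text{th}$ derivative of its probability generating function evaluated at $y=1$, so that
\[
\sum_{p=0}^{n-1} \frac{p!}{(p-m)!}\,{\cal G}_n(p)
= \left.\frac{d^m}{dy^m}\right|_{y=1} {\cal P}_n(y).
\]
The decisive convenience is that the generating function ${\cal P}_n(y)$ has already been expanded as a power series in $(y-1)$ in the display immediately preceding the lemma, rather than in $y$. This is exactly the form one wants when differentiating and setting $y=1$.

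First I would take the final expression for ${\cal P}_n(y)$,
\[
{\cal P}_n(y)=\sum_{k=0}^{n-1}\frac{(n-1)!}{(n-k-1)!}\frac{2^k k!}{(k+1)(2k+1)! }(y-1)^k,
\]
and observe that under $\frac{d^m}{dy^m}$ followed by setting $y=1$, every term with $k<m$ is killed by the derivative while every term with $k>m$ vanishes at $y=1$; only the $k=m$ term survives, contributing an overall factor $m!$. Hence the factorial moment equals
\[
m!\cdot\frac{(n-1)!}{(n-m-1)!}\frac{2^m m!}{(m+1)(2m+1)! }.
\]

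The remaining step is purely algebraic: I would rewrite $(2m+1)!=(2m+1)!!\,(2m)!!=(2m+1)!!\,2^m m!$, which collapses the factor $\frac{2^m(m!)^2}{(2m+1)!}$ to $\frac{m!}{(2m+1)!!}$ and yields the stated closed form $\frac{(n-1)!}{(n-m-1)!}\frac{m!}{(m+1)(2m+1)!! }$. I do not expect any genuine obstacle in this lemma; the only place demanding care is this double-factorial bookkeeping, which is routine.

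Finally, to obtain the mean and variance I would specialise to $m=1$ and $m=2$. Setting $m=1$ gives $E(G)=(n-1)/6$ immediately. Setting $m=2$ gives the second factorial moment $E\!\left[G(G-1)\right]=\frac{2(n-1)(n-2)}{45}$; combining this via $\mathrm{Var}(G)=E[G(G-1)]+E(G)-E(G)^2$ and clearing denominators over $180$ produces the claimed value $(n-1)(3n+19)/180$.
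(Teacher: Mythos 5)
Your proposal is correct and follows essentially the same route as the paper: differentiating the $(y-1)$-expansion of ${\cal P}_n(y)$ given just before the lemma, so that only the $k=m$ term survives at $y=1$, and then simplifying via $(2m+1)!=(2m+1)!!\,2^m m!$. Your explicit verification of the variance $(n-1)(3n+19)/180$ from the $m=1,2$ moments is a welcome addition that the paper leaves implicit, but it is the same argument.
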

\begin{proof}
  Using the form of ${\cal P}_n(y)$ given above, we find
\begin{equation}\nonumber
\sum_{p=0}^{n-1} \frac{p!}{(p-m)!}\,{\cal G}_n(p)
= \left.\frac{d^m}{dy^m}\right|_{y=1} {\cal P}_n(y)
= \frac{(n-1)!}{(n-m-1)!}\frac{2^m(m!)^2}{(m+1)(2m+1)! },
\end{equation}
which yields the desired result upon simplification.
\end{proof}
In the limit as $n\to\infty$ we define a continuous real variable
$x=\lim_{n\to\infty}p/(n-1) \in [0,1]$, and an associated continuous
probability distribution
\begin{equation}\nonumber
  {\cal G}(x) = \lim_{n\to\infty} (n-1)\,{\cal G}_n\left((n-1)x\right).
\end{equation}
We note the similarity in the factorial moments between ${\cal
  G}_n(p)$ and ${\cal K}_n(p)$ (see Lemma \ref{Lem:Kfact}); indeed
those of ${\cal G}_n(p)$ are equal to $1/(m+1)$ times those of ${\cal
  K}_n(p)$. The following lemma allows us to exploit this fact to
determine the functional form of ${\cal G}(x)$.
\begin{lemma}\label{Lem:IBP}
  Let ${\cal P}(x)$ be a distribution with support on $x\in [a,b]$. Then  
\begin{equation}\nonumber
  \frac{1}{m+1}\int_a^b dx\,x^m{\cal P}(x) = \int_0^bdx\, x^m c_b
  -\int_0^adx\, x^m c_a-
  \int_a^bdx\,x^m \int^xdy\,\frac{{\cal P}(y)}{y},
\end{equation}
holds true, assuming the integrals are convergent. The constants
$c_a$ and $c_b$ are given by
\begin{equation}\nonumber
  c_a = \int^a dy\,\frac{{\cal P}(y)}{y}, \quad
  c_b = \int^b dy\,\frac{{\cal P}(y)}{y}.
\end{equation}
\end{lemma}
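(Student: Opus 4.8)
The plan is to recognise this identity as a single integration by parts, arranged so that the indefinite integral $\int^x dy\,{\cal P}(y)/y$ serves as the antiderivative and the constants $c_a,c_b$ simply record its values at the endpoints. I would begin by fixing one antiderivative $F(x):=\int^x dy\,{\cal P}(y)/y$ of ${\cal P}(x)/x$, so that $F'(x)={\cal P}(x)/x$ and hence ${\cal P}(x)=x\,F'(x)$, while by definition $c_a=F(a)$ and $c_b=F(b)$. The essential point is that the \emph{same} $F$ is used in all three places, since $\int^x$ is only fixed up to an additive constant; with this understood the constant cancels out of the final identity.

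Next I would substitute ${\cal P}(x)=x\,F'(x)$ into the left-hand side and integrate by parts, taking $x^{m+1}$ for the polynomial factor and $F'(x)\,dx$ for the differential:
\begin{equation}\nonumber
  \frac{1}{m+1}\int_a^b dx\,x^m{\cal P}(x)
  =\frac{1}{m+1}\int_a^b dx\,x^{m+1}F'(x)
  =\frac{b^{m+1}F(b)-a^{m+1}F(a)}{m+1}-\int_a^b dx\,x^m F(x).
\end{equation}
I would then rewrite the two boundary terms as integrals over $[0,b]$ and $[0,a]$ using $\int_0^b dx\,x^m=b^{m+1}/(m+1)$ and the analogous formula for $a$, together with $F(b)=c_b$ and $F(a)=c_a$. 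Since $c_a,c_b$ are constants this turns $b^{m+1}F(b)/(m+1)$ into $\int_0^b dx\,x^m c_b$ and $a^{m+1}F(a)/(m+1)$ into $\int_0^a dx\,x^m c_a$, while the remaining integral is precisely $\int_a^b dx\,x^m\int^x dy\,{\cal P}(y)/y$. Collecting these three terms reproduces the claimed right-hand side.

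The main difficulty is one of bookkeeping rather than analysis: I must ensure that the single antiderivative $F$ appears consistently in $c_a$, $c_b$, and the inner integral, so that the arbitrary constant of integration cancels, and that every integral is finite. The hypothesis that ``the integrals are convergent'' is exactly what licenses the integration by parts and the finiteness of the boundary terms $x^{m+1}F(x)$ at $x=a,b$; in particular it requires ${\cal P}(y)/y$ to be integrable near the relevant endpoints. In the intended application $a=0$, where both $\int_0^a dx\,x^m c_a$ and the boundary term $a^{m+1}F(a)$ vanish provided $F(x)$ does not diverge faster than $x^{-(m+1)}$ at the origin, which the convergence assumption guarantees.
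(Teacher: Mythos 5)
Your proof is correct and is essentially the paper's own argument: the paper performs the same integration by parts (integrating $x^m$ and differentiating the antiderivative $\int^x dy\,{\cal P}(y)/y$), merely starting from the last term of the right-hand side rather than from the left-hand side, and then re-expresses the boundary terms $b^{m+1}c_b/(m+1)$ and $a^{m+1}c_a/(m+1)$ as integrals over $[0,b]$ and $[0,a]$ exactly as you do. Your added remarks on using a single consistent antiderivative $F$ and on convergence are sound but do not change the route.
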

\begin{proof}
We begin with the last term on the right hand side and apply
integration by parts, integrating $x^m$ and differentiating $\int dy
\,y^{-1}{\cal P}(y)$
\begin{equation}\nonumber
  \int_a^bdx\,x^m \int^xdy\,\frac{{\cal P}(y)}{y} =
  \frac{b^{m+1}}{m+1} \int^b dy\,\frac{{\cal P}(y)}{y} -
  \frac{a^{m+1}}{m+1} \int^a dy\,\frac{{\cal P}(y)}{y} -
  \frac{1}{m+1}\int_a^b dx\,x^m{\cal P}(x).
\end{equation}
We then re-express the boundary terms as integrals over $x$, and
obtain the desired result.
\end{proof}

\begin{theorem}\label{Thm:Gx}
The asymptotic distribution ${\cal G}(x)$ is given by
\begin{equation}\nonumber
  {\cal G}(x)=
  \begin{cases}
    2\tanh^{-1}\sqrt{1-2x}& 0\leq x < 1/2 \\
    0& 1/2 \leq x \leq 1
    \end{cases}
\end{equation}
\end{theorem}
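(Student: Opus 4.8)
The plan is to follow the strategy already used for Theorems \ref{Thm:Kx} and \ref{Thm:Cx}: rather than take a pointwise limit of the exact distribution, I would show that the proposed ${\cal G}(x)$ reproduces the large-$n$ limit of every factorial moment, and then appeal to uniqueness of the (Hausdorff) moment problem on $[0,1]$ to conclude. From Lemma \ref{Lem:Gfact}, using $\frac{(n-1)!}{(n-m-1)!}\to n^m$ as $n\to\infty$, the target moments of the limiting density are
\begin{equation}\nonumber
\int_0^1 dx\,x^m\,{\cal G}(x) = \frac{m!}{(m+1)(2m+1)!!},
\end{equation}
which, crucially, are exactly $1/(m+1)$ times the moments $m!/(2m+1)!!$ of ${\cal K}(x)$ obtained in the proof of Theorem \ref{Thm:Kx}.

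The central step is to convert this factor of $1/(m+1)$ into a statement about densities by invoking Lemma \ref{Lem:IBP}. I would apply the lemma with ${\cal P}(x)={\cal K}(x)$, whose support is $[0,1/2)$, so that $a=0$ and $b=1/2$. The boundary term at $a=0$ vanishes because its integration range is empty, and the lemma then reads
\begin{equation}\nonumber
\frac{m!}{(m+1)(2m+1)!!} = \frac{1}{m+1}\int_0^{1/2}dx\,x^m\,{\cal K}(x)
= \int_0^{1/2}dx\,x^m\left(c_b-\int^x dy\,\frac{{\cal K}(y)}{y}\right).
\end{equation}
Reading off the integrand, the candidate density is ${\cal G}(x)=c_b-\int^x dy\,{\cal K}(y)/y$ on $(0,1/2)$ and ${\cal G}(x)=0$ for $x\geq 1/2$; since this matches all moments, uniqueness pins down ${\cal G}$. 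It then remains only to evaluate the antiderivative: with the substitution $u=\sqrt{1-2y}$ one finds $\int dy\,\tfrac{1}{y\sqrt{1-2y}}=-2\tanh^{-1}\sqrt{1-2y}$, so $c_b=-2\tanh^{-1}(0)=0$ and hence ${\cal G}(x)=2\tanh^{-1}\sqrt{1-2x}$ on $(0,1/2)$, as claimed.

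The main obstacle I anticipate is analytic rather than algebraic: one must verify that the boundary behaviour near $x=0$ is benign. The antiderivative $-2\tanh^{-1}\sqrt{1-2x}$ diverges as $x\to 0^+$, so the constant $c_a$ appearing in Lemma \ref{Lem:IBP} would formally be infinite; the saving feature is that the $a=0$ term is killed outright by its empty range, and that the divergence of ${\cal G}$ itself is only logarithmic (${\cal G}(x)\sim-\ln x$), which is integrable against every $x^m$ so that all the integrals entering the lemma genuinely converge. Beyond this, I would need to justify — exactly as in the companion theorems — that the discrete factorial moments of ${\cal G}_n(p)$ converge to the moments of the limiting density in the strict $n\to\infty$ limit, which is what legitimises reading the asymptotic distribution off from its moment sequence.
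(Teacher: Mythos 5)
Your proposal is correct and takes essentially the same route as the paper: the paper's proof likewise applies Lemma \ref{Lem:IBP} with ${\cal P}(x)={\cal K}(x)$, computes $\int^x dy\,{\cal K}(y)/y=-2\tanh^{-1}\sqrt{1-2x}$, and observes that the boundary terms vanish since $c_b=0$ at $b=1/2$ while the $a=0$ term has empty range. Your added appeals to Hausdorff moment determinacy and to the integrability of the logarithmic singularity at $x=0$ simply make explicit what the paper leaves implicit.
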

\begin{proof}
We use Lemma \ref{Lem:IBP}, letting ${\cal P}(x) = {\cal K}(x)$ from
Theorem \ref{Thm:Kx}, in order to deduce the distribution which
produces moments which are those of ${\cal K}(x)$ dressed by
$(m+1)^{-1}$. We note that
\begin{equation}\nonumber
  \int^x dy\, \frac{{\cal K}(y)}{y}
  = \int^x \frac{dy}{y\sqrt{1-2y}} =
  -2\tanh^{-1}\sqrt{1-2x}.
\end{equation}
The boundary terms are zero as $c_b=0$ since $b=1/2$ whilst $a=0$. 
\end{proof}

\section{Enumeration by excluded pairs}
We remind the reader that an {\it excluded} pair as a pair whose left
and right endpoints are both either to the left of the given pair's
left endpoint, or to the right of the given pair's right endpoint.

\begin{proposition}\label{Prop:Ynqr}
  The number $Y_{n,q,r}$ of configurations with at least $q$
  excluded pairs to the left of the given pair, and at least $r$
  excluded pairs to the right of the given pair, is given by
  \begin{equation}\nonumber
    Y_{n,q,r} = \sum_{d=0}^{2n-2-2q-2r} \sum_{\ell = 2q}^{2n-d-2-2r}
    {\ell \choose 2q}(2q-1)!!{2n-d-\ell-2\choose 2r}\,(2r-1)!!\,(2n-2q-2r-3)!!.
    \end{equation}
\end{proposition}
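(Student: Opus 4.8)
The plan is to mimic the direct enumeration used for $H_{n,q}$ in Proposition \ref{Prop:Hnq}, adapted to the fact that an excluded pair lives entirely on one side of the given pair rather than straddling it. First I would parameterise a configuration by the size $d$ of the given pair together with its position, encoded by the number $\ell$ of vertices lying to the left of its left endpoint; the number of vertices to the right of its right endpoint is then $2n-d-\ell-2$. With this parameterisation fixed, the given pair occupies $2$ of the $2n$ vertices, $\ell$ are available on the left, and $2n-d-\ell-2$ on the right.

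Next I would designate the required excluded pairs. To produce at least $q$ left-excluded pairs, select $2q$ of the $\ell$ left vertices and match them among themselves; since an excluded pair consumes two vertices on the same side (in contrast to a containing pair, which uses one vertex from each side), this contributes a factor $\binom{\ell}{2q}$ for the choice and $(2q-1)!!$ for the internal perfect matching, rather than the $q!$ cross-matching factor seen in Proposition \ref{Prop:Hnq}. Symmetrically, the right-excluded pairs contribute $\binom{2n-d-\ell-2}{2r}(2r-1)!!$. The remaining $2n-2-2q-2r$ unused vertices, namely all vertices other than the given pair's endpoints and the $2q+2r$ just designated, are then matched freely in $(2n-2q-2r-3)!!$ ways; this free matching is permitted to create further excluded pairs, which is precisely the over-counting that makes the ``at least'' statement, and the subsequent bivariate inclusion--exclusion, work.

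Finally I would assemble the product of these four factors into the summand and sum over the admissible positions and sizes. The range of $d$ is $0\le d\le 2n-2-2q-2r$, since at least $2q+2r$ vertices must lie outside the given pair; and for fixed $d$ the position variable must satisfy $\ell\ge 2q$ (enough room on the left) together with $2n-d-\ell-2\ge 2r$, i.e. $\ell\le 2n-d-2-2r$ (enough room on the right). These are exactly the limits appearing in the stated double sum.

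I expect the only real subtlety, rather than a genuine obstacle, to be bookkeeping: confirming that $(2n-2q-2r-3)!!$ correctly counts the perfect matchings on the $2n-2-2q-2r$ leftover vertices, and that the left and right selections are truly independent once $d$ and $\ell$ are fixed, so that the summand factorises as claimed.
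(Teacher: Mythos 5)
Your proposal is correct and follows essentially the same route as the paper's own proof: the same parameterisation by size $d$ and left-offset $\ell$, the same four counting factors $\binom{\ell}{2q}(2q-1)!!\binom{2n-d-\ell-2}{2r}(2r-1)!!(2n-2q-2r-3)!!$, and the same summation ranges. Your added remarks --- contrasting the internal matchings $(2q-1)!!$ with the cross-matching factor $q!$ of Proposition \ref{Prop:Hnq}, and noting that the free matching of leftover vertices is exactly what makes the ``at least'' semantics compatible with the later inclusion--exclusion --- are accurate and make explicit what the paper leaves implicit.
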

\begin{proof}
We begin by parameterising the size and position of the given pair as
indicated in Figure \ref{Fig:Hnq}. We select $2q$ vertices from the
set of $\ell$ vertices to the left of the given pair, and match them
amongst themselves in all possible ways. Similarly, we select $2r$
vertices from the set of $2n-2-d-\ell$ vertices to the right of the
given pair, and match them amongst themselves in all possible
ways. The remaining vertices are matched amongst themselves in all
possible ways. We note that
\begin{itemize}
\item There are ${\ell \choose 2q}$ ways of selecting $2q$ vertices from the $\ell$.
\item There are $(2q-1)!!$ ways of matching these vertices amongst themselves.
\item There are ${2n-d-\ell-2\choose 2r}$ ways of selecting $2r$
  vertices from the $2n-2-d-\ell$.
\item There are $(2r-1)!!$ ways of matching these vertices amongst themselves.
\item There are $(2n-2q-2r-3)!!$ ways to match the remaining $2n-2q-2r-2$ vertices.
\end{itemize}
These enumerations correspond to the factors of the summand; the sum
is over all possible values of position $\ell$ and size $d$ of the given pair.
\end{proof}
\begin{table}[t]
\begin{center}
  \begin{tabular}{c|llllll}
  $n$ \textbackslash$p$& 0& 1& 2& 3& 4& 5\\
  \hline  
  1& 1\\
  2& 4& 2\\
  3& 22& 16& 7\\
  4& 160& 136& 88& 36\\
  5& 1464& 1344& 1044& 624& 249\\
  6& 16224& 15504& 13344& 9624& 5484& 2190\\
\end{tabular}
\end{center}
\caption{The numbers $X_{n,p}$, \seqnum{A336601} in the OEIS, to appear. The
  first column is \seqnum{A087547}, the leading diagonal is \seqnum{A034430}.}
\end{table}
\begin{lemma}\label{Lem:Xnp}
  The number $X_{n,p}$ of configurations with exactly $p$ excluded pairs is given by
  \begin{equation}\nonumber
  X_{n,p}=n\,(2n-1)!!\,[y^p] \int_0^1 d\alpha\,
  \frac{\left(1+(1-2\alpha(1-\alpha))(y-1)\right)^n-1}{n(1-2\alpha(1-\alpha))(y-1)}.
  \end{equation}
\end{lemma}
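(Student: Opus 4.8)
The plan is to mirror the proof of Lemma \ref{Lem:Gnp}, but now carrying an independent bookkeeping of excluded pairs on the left and on the right, and only collapsing the two onto a single count of total excluded pairs at the very end. Concretely, I would begin from Proposition \ref{Prop:Ynqr}, reduce $Y_{n,q,r}$ to a clean single integral over $\alpha$, assemble the two-variable generating function $Y_n(y_L,y_R)=\sum_{q,r}Y_{n,q,r}\,y_L^q\,y_R^r$, and finally apply a double inclusion-exclusion (c.f. \cite{Wilf}) together with the diagonal specialization $y_L=y_R=y$.

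First I would simplify the inner structure of $Y_{n,q,r}$. Writing $\binom{\ell}{2q}(2q-1)!!=\ell!/(2^q q!(\ell-2q)!)$ and likewise for the right factor, and setting $s=2n-d-2$, I would apply the Euler Beta integral $\ell!\,(s-\ell)!=(s+1)!\int_0^1 d\alpha\,\alpha^\ell(1-\alpha)^{s-\ell}$ exactly as in Lemma \ref{Lem:Knp}. The shift $j=\ell-2q$ then turns the sum over the divide between left and right exterior vertices into a binomial sum that collapses via $(\alpha+(1-\alpha))^{s-2q-2r}=1$, leaving a clean factor $\alpha^{2q}(1-\alpha)^{2r}$ and a reciprocal factorial that merges with the Beta prefactor. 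The remaining sum over the size $d$ is a sum of products of $2q+2r+1$ consecutive integers, which the hockey-stick identity $\sum_{t=K}^{T}t!/(t-K)!=K!\binom{T+1}{K+1}$ evaluates in closed form. I expect this to produce
\begin{equation}\nonumber
 Y_{n,q,r}=n\,(2n-1)!!\int_0^1 d\alpha\,\frac{(n-1)!}{q!\,r!\,(n-1-q-r)!}\,\frac{\alpha^{2q}(1-\alpha)^{2r}}{q+r+1},
\end{equation}
the natural two-sided analogue of the clean form obtained for $H_{n,q}$ in the proof of Lemma \ref{Lem:Gnp}.

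Next I would sum against $y_L^q y_R^r$. With $a=y_L\alpha^2$ and $b=y_R(1-\alpha)^2$, the trinomial sum $\sum_{q,r}\frac{(n-1)!}{q!\,r!\,(n-1-q-r)!}\frac{a^q b^r}{q+r+1}$ is handled by writing $1/(q+r+1)=\int_0^1 t^{q+r}\,dt$, whereupon the $q,r$ sums assemble into $(1+t(a+b))^{n-1}$ and the $t$-integral gives $((1+a+b)^n-1)/(n(a+b))$. This yields a closed form for $Y_n(y_L,y_R)$ in which the left and right contributions enter only through the combination $a+b=y_L\alpha^2+y_R(1-\alpha)^2$.

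The final, and conceptually most delicate, step is the inclusion-exclusion. I would argue that selecting $q$ left-excluded pairs out of the $p_L$ present (and $r$ out of $p_R$ on the right) gives $Y_{n,q,r}=\sum_{p_L,p_R}\binom{p_L}{q}\binom{p_R}{r}Z_{n,p_L,p_R}$, where $Z_{n,p_L,p_R}$ counts configurations with exactly $p_L$ left- and $p_R$ right-excluded pairs; hence $Y_n(y_L,y_R)=\sum_{p_L,p_R}Z_{n,p_L,p_R}(1+y_L)^{p_L}(1+y_R)^{p_R}$ and $Z_{n,p_L,p_R}=[y_L^{p_L}y_R^{p_R}]\,Y_n(y_L-1,y_R-1)$. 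Since an excluded pair is either left- or right-excluded but not both, the total count is $p=p_L+p_R$, so summing over the diagonal is effected by setting $y_L=y_R=y$, giving $X_{n,p}=[y^p]\,Y_n(y-1,y-1)$. Using $\alpha^2+(1-\alpha)^2=1-2\alpha(1-\alpha)$ in $a+b$ then produces exactly the claimed integrand. The hard part will be getting this two-sided bookkeeping right — in particular verifying that a choice of $2q$ exterior-left vertices together with a perfect matching among them corresponds bijectively to selecting $q$ of the genuine left-excluded pairs of the final configuration — since the left/right coupling through the factor $(2n-2q-2r-3)!!$ is precisely what prevents treating the two sides as independent before the diagonal specialization.
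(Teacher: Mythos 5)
Your proposal is correct and follows essentially the same route as the paper: the Beta-integral reduction of $Y_{n,q,r}$ to the form $\frac{n\,(2n-1)!!}{q+r+1}\binom{n-1}{q,r}\int_0^1 d\alpha\,\alpha^{2q}(1-\alpha)^{2r}$, then a generating function in the excluded-pair markers, then inclusion-exclusion. Your two-variable bookkeeping $Y_n(y_L,y_R)$ with the diagonal specialization $y_L=y_R=y$ is exactly equivalent to the paper's single-variable sum of $Y_{n,q,r}$ against $y^{q+r}$, and the point you flag as delicate --- that marking $q$ left and $r$ right matched exterior pairs counts each configuration with multiplicity $\binom{p_L}{q}\binom{p_R}{r}$ --- is precisely the standard marked-subset interpretation the paper invokes when it writes $X_{n,p}=[y^p]Y_n(y-1)$.
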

\begin{proof}
We begin with the result of Proposition \ref{Prop:Ynqr}, and shift the
summation variable, defining $m=\ell-2q$, so that
\begin{equation}\nonumber
  \begin{split}
  Y_{n,q,r} &= \sum_{d=0}^{2n-2-2q-2r} \sum_{m = 0}^{2n-d-2-2r-2q}
  {m+2q \choose 2q}(2q-1)!!\\
  &\qquad\qquad\times{2n-d-m-2q-2\choose 2r}\,(2r-1)!!\,(2n-2q-2r-3)!!\\
  &= \frac{(2n-2q-2r-3)!!}{2^{q+r}\,q!\,r!}
  \sum_{d=0}^{2n-2-2q-2r} \sum_{m = 0}^{2n-d-2-2r-2q}
  \frac{(m+2q)!\,(2n-2-d-m-2q)!}
       {m! \,(2n-2-2q-2r-d-m)!}
  \end{split}
\end{equation}
We exploit the Euler Beta integral used in the proof of Lemma \ref{Lem:Knp} to obtain
\begin{equation}\nonumber
\begin{split}
  Y_{n,q,r} &=  \frac{(2n-2q-2r-3)!!}{2^{q+r}\,q!\,r!}
  \sum_{d=0}^{2n-2-2q-2r} 
(2n-d-1)!\\
  &\qquad\times\sum_{m = 0}^{2n-d-2-2r-2q}\int_0^1d\alpha\,\frac{\alpha^{m+2q}
  (1-\alpha)^{2n-d-m-2q-2}\,}{m!\,(2n-d-m-2q-2r-2)!}\\
& =\frac{(2n-2q-2r-3)!!}{2^{q+r}\,q!\,r!}\sum_{d=0}^{2n-2-2q-2r} 
\frac{(2n-d-1)!}{(2n-d-2q-2r-2)!} \,
  \int_0^1d\alpha\,
  \alpha^{2q}(1-\alpha)^{2n-d-2q-2}\\
    & \qquad \times\sum_{m = 0}^{2n-d-2-2r-2q}
         {2n-d-2q-2r-2\choose
      m}\left(\frac{\alpha}{1-\alpha}\right)^m ,
\end{split}
  \end{equation}
where we have rearranged the summand to make the binomial nature of
the sum over $m$ manifest; performing this sum we obtain
\begin{equation}\nonumber
\begin{split}
  Y_{n,q,r} &=
  \frac{(2n-2q-2r-3)!!}{2^{q+r}\,q!\,r!} \sum_{d=0}^{2n-2-2(q+r)}
 \frac{(2n-d-1)!}{(2n-d-2q-2r-2)!}\\
  & \qquad \times\int_0^1d\alpha\,
  \alpha^{2q}(1-\alpha)^{2n-d-2q-2}
   \, 
    \left(1+\frac{\alpha}{1-\alpha}\right)^{2n-d-2q-2r-2}\\
    &=   \frac{(2n-2q-2r-3)!!}{2^{q+r}\,q!\,r!} 
    \int_0^1d\alpha\,
  \alpha^{2q}(1-\alpha)^{2r}
    \sum_{d=0}^{2n-2-2q}
    \frac{(2n-d-1)!}{(2n-d-2q-2r-2)!}\\
    &=  \frac{(2n-2q-2r-3)!!}{2^{q+r}\,q!\,r!} 
    \int_0^1d\alpha\,
    \alpha^{2q}(1-\alpha)^{2r}\, \frac{n(2n-1)!}{(q+r+1)(2n-2q-2r-2)!}\\
    &=\frac{n\,(2n-1)!!}{q+r+1}{n-1 \choose q,r}
    \int_0^1d\alpha\,
    \alpha^{2q}(1-\alpha)^{2r}.
\end{split}
\end{equation}
We now form a generating function by summing both $q$ and $r$ against $y^{q+r}$
\begin{equation}\nonumber
  Y_n(y) = \sum_{q,r=0}^{n-1} Y_{n,q,r}\,y^{q+r} =n\,(2n-1)!!
  \int_0^1 d\alpha\,
  \frac{\left(1+(1-2\alpha(1-\alpha))y\right)^n-1}{n(1-2\alpha(1-\alpha))y}.
\end{equation}
Finally we note that by inclusion-exclusion (c.f. \cite{Wilf}),
$X_{n,p}=[y^p]Y_n(y-1)$, which yields the desired result.
\end{proof}
\begin{theorem}\label{Thm:Xyz}
The exponential generating function for the
numbers $X_{n,p}$ is given by
\begin{equation}\nonumber
X(y,z)=
  \sum_{n,p} X_{n,p}\frac{z^n}{n!} y^p =
  \frac{1}{(1-y)\sqrt{1-2z}}
  \tan^{-1}\frac{(1-y)z}{\sqrt{(1-2z)(1-2yz)}}.
\end{equation}
\end{theorem}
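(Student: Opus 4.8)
The plan is to mirror the proof of Theorem~\ref{Thm:Gyz}: first carry out the sum over $n$ in the representation of Lemma~\ref{Lem:Xnp}, and only afterwards perform the integral over $\alpha$. Summing $n\,(2n-1)!!/n!\cdot z^n$ against the two pieces of the integrand and using $\sum_{n\ge1}\frac{(2n-1)!!}{n!}w^n=(1-2w)^{-1/2}-1$ (with $w=z$ and $w=z(1+(1-2\alpha(1-\alpha))(y-1))$ respectively), the explicit factor of $n$ cancels the $n$ in the denominator of Lemma~\ref{Lem:Xnp}, and I obtain
\[
X(y,z)=\int_0^1 d\alpha\,\frac{1}{(1-2\alpha(1-\alpha))(y-1)}\left(\frac{1}{\sqrt{1-2z\bigl(1+(1-2\alpha(1-\alpha))(y-1)\bigr)}}-\frac{1}{\sqrt{1-2z}}\right).
\]

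The decisive observation is that, in contrast with the containing-pairs case where the weight was $2\alpha(1-\alpha)$, here the weight is $1-2\alpha(1-\alpha)$. Writing $u=1-y$ and $P=1-2z$, so that $1-2z\bigl(1+(1-2\alpha(1-\alpha))(y-1)\bigr)=P+2zu\,(1-2\alpha(1-\alpha))$, this calls for the substitution $t=2\alpha-1$ (equivalently $\alpha=\sin^2\tfrac\theta2$, as in the alternative proof of Theorem~\ref{Thm:Kx}), under which $1-2\alpha(1-\alpha)=\tfrac12(1+t^2)$ and the integrand is even in $t$. This reduces $X(y,z)$ to
\[
X(y,z)=\frac1u\int_0^1 dt\,\frac{2}{1+t^2}\left(\frac{1}{\sqrt P}-\frac{1}{\sqrt{P+zu+zu\,t^2}}\right),
\]
a combination of two elementary integrals.

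I would then evaluate $\int_0^1 dt/(1+t^2)=\pi/4$ for the first piece, and for the second use $\int_0^1\frac{dt}{(1+t^2)\sqrt{a+bt^2}}=\frac{1}{\sqrt{a-b}}\tan^{-1}\frac{\sqrt{a-b}}{\sqrt{a+b}}$ with $a=P+zu$, $b=zu$; the happy cancellation $a-b=P$ reproduces exactly the prefactor $1/\sqrt P$, while $a+b=P+2zu=1-2yz$. Collecting terms gives
\[
X(y,z)=\frac{1}{u\sqrt P}\left(\frac\pi2-2\tan^{-1}\frac{\sqrt P}{\sqrt{1-2yz}}\right).
\]

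The main obstacle, and the only genuinely non-routine step, is the final trigonometric consolidation: I must show that $\frac\pi2-2\tan^{-1}\frac{\sqrt P}{\sqrt{1-2yz}}$ equals the single arctangent claimed. Setting $\theta=\tan^{-1}\frac{\sqrt P}{\sqrt{1-2yz}}\in(0,\pi/2)$, I compute $\tan\!\left(\frac\pi2-2\theta\right)=\cot 2\theta=\frac{1-\tan^2\theta}{2\tan\theta}$, which simplifies to $\frac{zu}{\sqrt{P(1-2yz)}}=\frac{(1-y)z}{\sqrt{(1-2z)(1-2yz)}}$; since $\frac\pi2-2\theta\in(-\pi/2,\pi/2)$ the branch is the principal one and the identity holds, yielding the stated form of $X(y,z)$. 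I would note in passing that one could instead push through the Feynman-parameter route of Theorem~\ref{Thm:Gyz}, but there the residual $\beta$-integrand retains a genuine square root in $\beta$, so the $t$-substitution above is the more economical path to the arctangent.
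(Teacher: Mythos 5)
Your proposal is correct and takes essentially the same route as the paper's own proof: sum over $n$ first, substitute $t=2\alpha-1$ (the paper's $x$ with $x^2=1-4\alpha(1-\alpha)$), evaluate the resulting integral via the arctangent antiderivative (which the paper derives by the substitution $\tan u = x\sqrt{A-1}/\sqrt{A+x^2}$, exactly the substitution that verifies your quoted formula), and finish with an equivalent trigonometric consolidation. The only differences are cosmetic: your bookkeeping with $u=1-y$, $P=1-2z$ versus the paper's single parameter $A=(1-(1+y)z)/(z(1-y))$, and your cotangent double-angle identity versus the paper's tangent double-angle formula followed by the complementary-angle identity.
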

\begin{proof}
  We sum the result of Lemma \ref{Lem:Xnp} against $z^n/n!$, and then
perform the integral over $\alpha$
\begin{equation}\nonumber
  \begin{split}
  &\sum_{n,p} X_{n,p}\frac{z^n}{n!} y^p
  =\int_0^1 d\alpha\,\sum_n \frac{(2n-1)!!}{n!}
  \frac{\left(1+(1-2\alpha(1-\alpha))(y-1)\right)^n-1}{(1-2\alpha(1-\alpha))(y-1)}\\
  &=\int_0^1 d\alpha\,\frac{1}{(1-2\alpha(1-\alpha))(y-1)}\left(
  \frac{1}{\sqrt{1-2z\left(1+(1-2\alpha(1-\alpha))(y-1)\right)}}
  -\frac{1}{\sqrt{1-2z}}\right).
  \end{split}
\end{equation}
We change the integration variable to $x$, where $x^2=1-4\alpha(1-\alpha)$, yielding
\begin{equation}\nonumber
  \begin{split}
  X(y,z) &= \frac{1}{(1-y)^{3/2}\sqrt{z}}\int_{-1}^1dx\,\frac{1}{1+x^2}\left(
  \frac{1}{\sqrt{A-1}}-\frac{1}{\sqrt{A+x^2}}  \right),\\
  &=\frac{1}{(1-y)^{3/2}\sqrt{z}}\left(\frac{\pi}{2\sqrt{A-1}}
  -\int_{-1}^1dx\,\frac{1}{1+x^2}
  \frac{1}{\sqrt{A+x^2}}\right).
  \end{split}
\end{equation}
where $A = (1-(1+y)z)/(z(1-y))$. A final change of variable to $u$,
where $\tan u = x\,\sqrt{A-1}/\sqrt{A+x^2}$ renders the remaining
integral trivial
\begin{equation}\nonumber
\begin{split}
  \int_{-1}^1dx\,\frac{1}{1+x^2}
\frac{1}{\sqrt{A+x^2}} &= \frac{1}{\sqrt{A-1}}
\int_{-\tan^{-1}\sqrt{(A-1)/(A+1)}}^{\tan^{-1}\sqrt{(A-1)/(A+1)}}du
=\frac{2}{\sqrt{A-1}} \tan^{-1} \sqrt{\frac{A-1}{A+1}}\\
&=\frac{1}{\sqrt{A-1}} \tan^{-1} \sqrt{A^2-1},
\end{split}
\end{equation}
where in the last equality we have exploited the double angle formula
for $\tan$. We thus obtain
\begin{equation}\nonumber
  \begin{split}
  X(y,z)   &=\frac{1}{(1-y)^{3/2}\sqrt{z}}\left(\frac{\pi}{2\sqrt{A-1}}
    -\frac{1}{\sqrt{A-1}} \tan^{-1} \sqrt{A^2-1}
    \right)\\
    &=\frac{1}{(1-y)^{3/2}\sqrt{z}}\frac{1}{\sqrt{A-1}}
    \tan^{-1} \frac{1}{\sqrt{A^2-1}},
  \end{split}
\end{equation}
which yields the desired result.
\end{proof}

\subsection*{Probability distribution and asymptotics}

We define a discrete random variable $X$ which corresponds to the
number of pairs which are excluded by the given pair. The result of Lemma
\ref{Lem:Xnp} implies that the probability that $X$ takes the value
$p$ is given by
\begin{equation}\nonumber
{\cal X}_{n,p}=[y^p] \int_0^1 d\alpha\,
  \frac{\left(1+(1-2\alpha(1-\alpha))(y-1)\right)^n-1}{n(1-2\alpha(1-\alpha))(y-1)}.
\end{equation}
In order to compute the factorial moments of this distribution, we
define a generating function as follows
\begin{equation}\nonumber
  \begin{split}
  {\cal P}_n(y)=\sum_{p=0}^{n-1} {\cal X}_n(p)\, y^p &= \int_0^1
  d\alpha\,
  \frac{\left(1+(1-2\alpha(1-\alpha))(y-1)\right)^n-1}{n(1-2\alpha(1-\alpha))(y-1)}\\
  &=\int_0^1 d\alpha\,\sum_{m=0}^{n-1}
  \left(1-2\alpha(1-\alpha)(y-1)\right)^m{n-1\choose m}\frac{1}{m+1}\\
  &=\sum_{m=0}^{n-1}\frac{(n-1)!}{(n-m-1)!}\frac{1}{m+1}\frac{(y-1)^m}{m!}
  \int_0^1 d\alpha\,\left(1+(1-2\alpha(1-\alpha))\right)^m.
  \end{split}
\end{equation}
\begin{lemma}\label{Lem:Xfact}
The $m^\text{th}$ factorial moment of ${\cal X}_n(p)$ is given by
\begin{equation}\nonumber
\sum_{p=0}^{n-1} \frac{p!}{(p-m)!}\,{\cal X}_n(p)=
  \frac{(n-1)!}{(n-m-1)!}\frac{1}{m+1}\int_{1/2}^1dx\,\frac{x^m}{\sqrt{2x-1}}.
\end{equation}
In particular this provides the mean $E(X)=(n-1)/3$, and the variance
$\mathrm{Var}(X)=2(n-1)(n+3)/45$.
\end{lemma}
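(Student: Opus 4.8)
The plan is to obtain the factorial moments as derivatives of the probability generating function ${\cal P}_n(y)$ evaluated at $y=1$, using $\sum_{p}\frac{p!}{(p-m)!}{\cal X}_n(p)=\left.\frac{d^m}{dy^m}\right|_{y=1}{\cal P}_n(y)$. The essential observation is that the integrand of ${\cal P}_n(y)$ is already a polynomial in $(y-1)$: writing $w=(1-2\alpha(1-\alpha))(y-1)$ and expanding $\frac{(1+w)^n-1}{nw}=\sum_{m=0}^{n-1}\frac{1}{m+1}\binom{n-1}{m}w^m$, one obtains ${\cal P}_n(y)=\sum_{m=0}^{n-1}\frac{1}{m+1}\binom{n-1}{m}(y-1)^m\int_0^1(1-2\alpha(1-\alpha))^m\,d\alpha$. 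Because each term carries the exact power $(y-1)^m$, applying $\frac{d^m}{dy^m}$ and then setting $y=1$ annihilates every other term and extracts a single summand.

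First I would record that $m!\binom{n-1}{m}=(n-1)!/(n-m-1)!$, so this derivative equals $\frac{(n-1)!}{(n-m-1)!}\frac{1}{m+1}\int_0^1(1-2\alpha(1-\alpha))^m\,d\alpha$. It then remains only to identify the $\alpha$-integral with $\int_{1/2}^1 x^m/\sqrt{2x-1}\,dx$.

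The main work is the change of variables $x=1-2\alpha(1-\alpha)$. The integrand is symmetric about $\alpha=1/2$, so I would write $\int_0^1=2\int_0^{1/2}$, and on $[0,1/2]$ invert to $\alpha=\tfrac12\bigl(1-\sqrt{2x-1}\bigr)$, under which $x$ runs from $1$ down to $1/2$ and $d\alpha=-dx/(2\sqrt{2x-1})$. The factor of $2$ coming from the symmetry cancels the $2$ in the Jacobian, producing exactly $\int_{1/2}^1 x^m/\sqrt{2x-1}\,dx$ and completing the stated identity. I expect this substitution—in particular correctly matching the square-root Jacobian against the symmetry factor, and tracking the reversal of the limits—to be the one step requiring genuine care.

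Finally, for the mean and variance I would specialise to $m=1$ and $m=2$. The substitution $u=2x-1$ reduces $\int_{1/2}^1 x^m/\sqrt{2x-1}\,dx$ to an elementary Beta-type integral, giving $2/3$ for $m=1$ and $7/15$ for $m=2$; hence $E(X)=(n-1)/3$ and $E(X(X-1))=7(n-1)(n-2)/45$. The variance then follows from $\mathrm{Var}(X)=E(X(X-1))+E(X)-E(X)^2$, which collapses to $2(n-1)(n+3)/45$ after combining over a common denominator.
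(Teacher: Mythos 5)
Your proof is correct and takes essentially the same route as the paper: expand ${\cal P}_n(y)$ as a polynomial in $(y-1)$, extract the $m^\text{th}$ coefficient by differentiating at $y=1$, and evaluate the remaining $\alpha$-integral via the substitution $x=1-2\alpha(1-\alpha)$. Your explicit handling of the symmetry factor against the square-root Jacobian, and the $m=1,2$ computations for the mean and variance, simply fill in details the paper leaves implicit.
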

\begin{proof}
We use the form of ${\cal P}_n(y)$ given above to obtain
\begin{equation}\nonumber
\sum_{p=0}^{n-1} \frac{p!}{(p-m)!}\,{\cal X}_n(p)
= \left.\frac{d^m}{dy^m}\right|_{y=1} {\cal P}_n(y)
=\frac{(n-1)!}{(n-m-1)!}\frac{1}{m+1}
  \int_0^1 d\alpha\,\left(1+(1-2\alpha(1-\alpha))\right)^m. 
\end{equation}
We change the integration variable to $x=1-2\alpha(1-\alpha)$, and
obtain the desired result.
\end{proof}

In the limit as $n\to\infty$ we define a continuous real variable
$x=\lim_{n\to\infty}p/(n-1) \in [0,1]$, and an associated continuous
probability distribution
\begin{equation}\nonumber
  {\cal X}(x) = \lim_{n\to\infty} (n-1)\,{\cal X}_n\left((n-1)x\right).
\end{equation}

\begin{theorem}\label{Thm:Xx}
The asymptotic distribution ${\cal X}(x)$ is given by
\begin{equation}\nonumber
  {\cal X}(x)=
  \begin{cases}
    \pi/2& 0\leq x < 1/2 \\
    \pi/2-2\tan^{-1}\sqrt{2x-1}& 1/2 \leq x \leq 1
    \end{cases}
\end{equation}
\end{theorem}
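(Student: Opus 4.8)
The plan is to follow exactly the strategy used for Theorem~\ref{Thm:Gx}, recognizing that the factorial moments of Lemma~\ref{Lem:Xfact} carry the same $1/(m+1)$ dressing that Lemma~\ref{Lem:IBP} was designed to invert. Starting from Lemma~\ref{Lem:Xfact} and using $\frac{(n-1)!}{(n-m-1)!}\to n^m$ in the large-$n$ limit, the continuous distribution ${\cal X}(x)$ must satisfy
\begin{equation}\nonumber
\int_0^1 dx\,x^m\,{\cal X}(x) = \frac{1}{m+1}\int_{1/2}^1 dx\,\frac{x^m}{\sqrt{2x-1}}
\end{equation}
for all $m\geq 0$. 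Since these are moments on the compact interval $[0,1]$, they determine ${\cal X}(x)$ uniquely, so it suffices to exhibit one distribution producing them.

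First I would apply Lemma~\ref{Lem:IBP} with ${\cal P}(x)=1/\sqrt{2x-1}$ and support $[a,b]=[1/2,1]$, which is precisely the object $\int_{1/2}^1 x^m{\cal P}(x)\,dx$ appearing on the right-hand side above. The only genuine computation required is the antiderivative
\begin{equation}\nonumber
\int^x \frac{dy}{y\sqrt{2y-1}} = 2\tan^{-1}\sqrt{2x-1},
\end{equation}
which follows from the substitution $u=\sqrt{2y-1}$, turning the integrand into $2/(1+u^2)$. Evaluating the boundary constants of Lemma~\ref{Lem:IBP} then gives $c_b = 2\tan^{-1}(1) = \pi/2$ and $c_a = 2\tan^{-1}(0) = 0$, so the $\int_0^a$ boundary term vanishes identically.

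Assembling the three terms of Lemma~\ref{Lem:IBP}, the surviving $c_b$ contribution supplies a flat density $\pi/2$ across all of $[0,1]$, while the remaining $-\int_{1/2}^1 x^m\cdot 2\tan^{-1}\sqrt{2x-1}\,dx$ subtracts $2\tan^{-1}\sqrt{2x-1}$ only on $[1/2,1]$. Reading off the density as a function of $x$ yields exactly the claimed piecewise form: $\pi/2$ on $[0,1/2)$ and $\pi/2-2\tan^{-1}\sqrt{2x-1}$ on $[1/2,1]$.

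I expect the main subtlety to be bookkeeping the supports: the constant $c_b$ lives over the entire interval $[0,1]$, whereas the inner-integral term is confined to $[1/2,1]$, and it is precisely this mismatch that produces the flat $\pi/2$ plateau for $x<1/2$ together with the $\tan^{-1}$ tail for $x\geq 1/2$. Everything else—the antiderivative and the two boundary evaluations—is routine, exactly as in the ${\cal G}(x)$ case, the sole difference being the sign inside the radical which converts the $\tanh^{-1}$ of Theorem~\ref{Thm:Gx} into the $\tan^{-1}$ seen here.
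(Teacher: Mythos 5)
Your proposal is correct and takes essentially the same route as the paper's own proof: apply Lemma~\ref{Lem:IBP} with ${\cal P}(x)=(2x-1)^{-1/2}$ on $[a,b]=[1/2,1]$, compute the antiderivative $\int^x dy\,/(y\sqrt{2y-1})=2\tan^{-1}\sqrt{2x-1}$, and evaluate $c_a=0$, $c_b=\pi/2$ to read off the flat $\pi/2$ plateau plus the $\tan^{-1}$ tail. Your extra remarks on moment determinacy on $[0,1]$ and on the support bookkeeping simply make explicit what the paper leaves implicit.
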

\begin{proof}
We use Lemma \ref{Lem:IBP}, letting ${\cal P}(x) = (2x-1)^{-1/2}$ from
the integrand of Lemma \ref{Lem:Xfact}, in order to deduce the
distribution which produces moments which are those of $(2x-1)^{-1/2}$
dressed by $(m+1)^{-1}$. We note that
\begin{equation}\nonumber
  \int^x dy\, \frac{1}{y\sqrt{2y-1}}=
  2\tan^{-1}\sqrt{2x-1}.
\end{equation}
We further note that $a=1/2$ and $b=1$, yielding $c_a=0$ and $c_b =
\pi/2$. Thus the distribution receives a constant contribution of
$\pi/2$ across the entire interval $x\in[0,1]$. By Lemma \ref{Lem:IBP}
we obtain the desired result.
\end{proof}

\end{document}